\documentclass[reqno,a4paper,12pt]{amsart}

\numberwithin{equation}{section}

\usepackage[utf8]{inputenc}
\usepackage{amsmath, amsthm, amssymb, amscd, accents, bm, amsfonts}
\usepackage{mathtools}
\usepackage{mathrsfs,dsfont}
\usepackage{datetime}
\usepackage{hyperref}
\usepackage{colonequals}

\usepackage[sort,nocompress]{cite}
\mathtoolsset{showonlyrefs}

\newtheorem{definition}{Definition}[section]
\newtheorem{theorem}[definition]{Theorem}
\newtheorem{lemma}[definition]{Lemma}

\def\N{{\mathbb N}}
\def\Z{{\mathbb Z}}
\def\R{{\mathbb R}}
\def\T{{\mathbb T}}
\def\C{{\mathbb C}}
\def\Q{{\mathbb Q}}

\newcommand{\lt}{{L^2(\R)}}

\newcommand{\ft}{{\mathcal{F}}}
\newcommand{\supp}{\operatorname{supp}}
\newcommand\1{\mathds{1}}

\begin{document}

\title[Compactly Supported Gabor Orthonormal Bases]{Compactly Supported Gabor Orthonormal Bases}

\author[Lukas Liehr]{Lukas Liehr}
\address{Department of Mathematics, Bar-Ilan University, Ramat-Gan 5290002, Israel}
\email{lukas.liehr@biu.ac.il}

\date{\today}
\subjclass[2020]{42B10, 42C15, 46B15}
\keywords{Orthonormal bases, Gabor system, tilings} 

\begin{abstract}
We characterize all lattices $\Lambda \subset \R^2$ and all compactly supported functions $g \in L^2(\R)$ for which the Gabor system $$\left \{ e^{2\pi i s x} g(x-t) : (t,s) \in \Lambda \right \}$$ forms an orthonormal basis for $L^2(\R)$. The characterization is given in geometric terms through translation tilings and discreteness properties of lattice projections. In particular, this resolves a conjecture of Han and Wang on the non-existence of Gabor bases along specific irrational lattices. Finally, we construct Gabor bases that cannot be realized by any product set, answering a problem of Iosevich and Mayeli.
\end{abstract}

\maketitle

\section{Introduction and main results}

\subsection{}
Let $g \in \lt$ and let $\Lambda \subset \R^2$ be a countable set such as a full-rank lattice, i.e., $\Lambda = A\Z^2 := \{ Az : z \in \Z^2 \}$ for an invertible matrix $A \in \R^{2 \times 2}$. The Gabor system of $g$ along $\Lambda$ is given by the system $\mathbf G(g,\Lambda) \subset \lt$ which is defined by
$$
\mathbf G(g,\Lambda) := \left \{ e^{2\pi i s x} g(x-t) : (t,s) \in \Lambda 
 \right \}.
$$
The function $g$ is called the window-function of the system $\mathbf G(g,\Lambda)$.
The most classical example of a Gabor system with compactly supported window function that forms an orthonormal basis for $\lt$ is given by $\mathbf G(\1_{[0,1]},\Z^2)$ where $\1_S$ denotes the characteristic function of a measurable set $S$. The property of $\mathbf G(\1_{[0,1]},\Z^2)$ being an orthonormal basis follows directly from the fact that the exponential system $\{ e^{2\pi i n x} : n \in \Z \}$ is an orthonormal basis for $L^2(0,1)$. The question of which other compactly supported functions $g$ have the property that $\mathbf G(g,\Lambda)$ is an orthonormal basis was investigated by various authors \cite{L05,AAK20,DL22,HW01,grepstad2026bounded} and is sometimes referred to as the Fuglede--Gabor problem, in analogy with the classical Fuglede problem on bases of exponentials \cite{fuglede1974commuting,tao2004fuglede,lev2022fuglede,greenfeld2017fuglede}.
This question has been studied from several different perspectives in the literature. In particular, the problem has been investigated in connection with tiling and rigidity properties of the set $\Lambda$ \cite{HW01,HW04,LW03,LM19,AAK20,DL22,GLW15,grepstad2026bounded}, as well as in higher-dimensional and convex-geometric settings \cite{GLW15,BO18,CL18,IM18}. Related questions have also been considered in discrete settings \cite{Z21}, over locally compact Abelian groups and fields \cite{IKL21,N24}, and in the context of Schauder bases \cite{BM21,HP06,LT26}.

\subsection{}
In the present paper we investigate the following problem: for what lattices $\Lambda$ and compactly supported functions $g$ does $\mathbf G(g,\Lambda)$ form an orthonormal basis? While the existence of \emph{some} function $g \in \lt$ yielding an orthonormal basis is settled by a result of Bekka \cite{bekka}, the existence of a compactly supported function constitutes an open problem. This problem is further motivated by a conjecture of Han and Wang \cite{HW01,HW04,LM19} as well as a question raised by Iosevich and Mayeli \cite{IM18} (see the discussion below for the precise conjecture and question). In the following we give a complete answer to this problem. To do so, we denote for a set $Q \subset \R^2$ the projection onto the first coordinate by $\pi_1(Q)$,
$$
\pi_1(Q) = \{ a \in \R : \exists b \in \R \ \, \text{such that} \ \, (a,b) \in Q \}.
$$
Moreover, $D(\Lambda)$ denotes the density of a lattice $\Lambda$, i.e., the reciprocal of the volume of a fundamental domain of $\Lambda$. We start by characterizing all lattices for which there exists a compactly supported window function generating a Gabor orthonormal basis.

\begin{theorem}\label{thm:main1}
    For every lattice $\Lambda \subset \R^2$, the following are equivalent:
    \begin{enumerate}
        \item there exists a compactly supported $g \in \lt$ such that $\mathbf{G}(g,\Lambda)$ is an orthonormal basis for $\lt$
        \item $D(\Lambda)=1$ and $\pi_1(\Lambda) = a \Z$ for some $a>0$.
    \end{enumerate}
\end{theorem}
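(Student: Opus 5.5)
The plan is to prove the two implications separately. The implication (2)$\Rightarrow$(1) comes from an explicit construction. The implication (1)$\Rightarrow$(2) rests on the density theorem together with an averaging argument that turns the Parseval identity into a tiling-type identity for $|g|^2$.

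\emph{(2)$\Rightarrow$(1).} Suppose $\pi_1(\Lambda)=a\Z$. Then $\pi_1$ restricted to $\Lambda$ is a surjective homomorphism onto $a\Z$. Its kernel $\Lambda\cap(\{0\}\times\R)$ must have rank one, since $\Lambda$ has rank two. Hence we can pick generators and write $\Lambda=\{(ak,\,ck+mb) : k,m\in\Z\}$ with $b>0$. The covolume of $\Lambda$ is $ab$, so $D(\Lambda)=1$ forces $b=1/a$. Now take $g=a^{-1/2}\1_{[0,a]}$. For fixed $k$, the family $\{e^{2\pi i (ck+m/a)x}g(x-ak)\}_{m\in\Z}$ is the orthonormal basis $\{a^{-1/2}e^{2\pi i m x/a}\}_{m}$ of $L^2[ak,a(k+1)]$, multiplied by the unimodular function $e^{2\pi i ckx}$. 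It is therefore again an orthonormal basis of $L^2[ak,a(k+1)]$. Since the intervals $[ak,a(k+1)]$, $k\in\Z$, tile $\R$, the whole system $\mathbf G(g,\Lambda)$ is an orthonormal basis of $\lt$.

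\emph{(1)$\Rightarrow$(2).} First, $D(\Lambda)=1$ follows from the standard density theorem for Gabor orthonormal bases along lattices (Ramanan--Steger, or the Wexler--Raz/Ron--Shen duality). The substantive part is showing that $\pi_1(\Lambda)$ is discrete. Since $\pi_1(\Lambda)$ is a finitely generated subgroup of $\R$, it is either cyclic or dense. In the dense case it has rank two, so $\pi_1$ is injective on $\Lambda$. Thus every $t\in\pi_1(\Lambda)$ carries exactly one frequency $s_t$.

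To rule this out, I would derive a tiling identity from Parseval by averaging over modulations. Fix $f\in L^2$ with compact support and apply Parseval to $M_\xi f$. Integrating in $\xi$ and using Plancherel gives $\int_\R|\langle M_\xi f,M_sT_tg\rangle|^2\,d\xi=\int|f|^2|g(\cdot-t)|^2$, which is independent of $s$. Averaging over $\xi\in[-N,N]$, each fiber $\{s:(t,s)\in\Lambda\}$ then contributes its asymptotic frequency density $\delta_t$. Letting $N\to\infty$, I expect to obtain
$$\sum_{t\in\pi_1(\Lambda)}\delta_t\,|g(x-t)|^2=1 \quad\text{a.e.}$$
In the discrete case $\pi_1(\Lambda)=a\Z$ we have $\delta_t=a$, so this reads $\sum_k|g(x-ak)|^2=1/a$, which is consistent. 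In the dense case every fiber is a single point, so $\delta_t=0$ and the left side vanishes. This contradicts the identity, and hence $\pi_1(\Lambda)$ must be cyclic.

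\emph{Main obstacle.} The hard step is justifying the exchange of the limit $N\to\infty$ with the sum over $\lambda$. The number of lattice points in a strip $K\times[-N,N]$ grows linearly in $N$, so the naive contribution of all $\lambda$ does not vanish. What must be shown is that the boundary effects near $|s|\approx N$ are negligible and that $\langle M_\xi f,M_sT_tg\rangle$ decays in $|\xi-s|$ uniformly in $t$. The compact support of $g$ confines $t$ to a bounded window, so one needs this decay uniformly over that window. I would first try smooth $f$, for which the decay can be controlled by $\hat f$, and then extend the resulting identity by density.

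If the averaging route fails, I would use the same scheme with $V_gg$ instead. Orthonormality forces $V_gg(t,s)=0$ for $(t,s)\in\Lambda\setminus\{0\}$, while $V_gg(t,\cdot)$ is the Fourier transform of the compactly supported function $g\,\overline{g(\cdot-t)}$. One would then combine this with density of $\pi_1(\Lambda)$ near $0$ to derive a contradiction.
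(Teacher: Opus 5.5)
Your direct verification of (2)$\Rightarrow$(1) is correct and simpler than going through Theorem~\ref{thm:main2} and Liu's theorem. Citing the density theorem for $D(\Lambda)=1$ is also exactly what the paper does. The gap is in showing that $\pi_1(\Lambda)$ is discrete, which is the real content of the theorem. The averaged identity you expect is false in the dense case, so the obstacle you flag is not a technical exchange of limits. Averaging Parseval for $M_\xi f$ over $\xi\in[-N,N]$ gives, in the limit, $(\mu*|g|^2)(x)=1$, where $\mu$ is the weak limit of $\frac{1}{2N}\sum_{(t,s)\in\Lambda,\,|s|\le N}\delta_t$. When $\pi_1(\Lambda)=a\Z$ this is $a\sum_k\delta_{ak}$, and you recover the known condition $a\sum_k|g(x-ak)|^2=1$. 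When $\pi_1(\Lambda)$ is dense, $\mu$ is not zero. Each fibre is a single point, but every bounded $t$-window contains infinitely many fibres, and by Weyl equidistribution $\mu=D(\Lambda)\,dt$. For $\Lambda_\alpha=\{(m+\alpha n,n)\}$, for instance, the average is asymptotically
\[
\int_\R|f(x)|^2\,\frac{1}{2N}\sum_{|n|\le N}\sum_{m\in\Z}|g(x-\alpha n-m)|^2\,dx\;\longrightarrow\;\|g\|_2^2\,\|f\|_2^2 .
\]
So the identity collapses to $D(\Lambda)\|g\|_2^2=1$, which every Gabor orthonormal basis satisfies, and no contradiction arises. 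This first-moment averaging cannot tell whether $\pi_1(\Lambda)$ is dense. Your fallback does not close the gap either. Each $t$ gives only one zero $V_gg(t,s_t)=0$ of an entire function of exponential type in $s$, which is no obstruction, and you do not say how density of $\pi_1(\Lambda)$ would enter.

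What is missing is a rigidity mechanism in which compact support does more than localize. The paper first reduces to $\Lambda_\alpha$ with $\alpha\notin\Q$ by a shear/dilation (Lemma~\ref{lma:chirp_invariance}), which preserves compact support. It then forms the weighted Zak transform $D(\omega,\theta)=\sum_k e^{\pi i\alpha k(k-1)}\widehat g(k+\omega)e^{2\pi ik\theta}$ and shows from orthonormality that $|D|=1$ a.e. Compact support of $g$ gives $D(\cdot,\theta)$ compact spectrum. A unimodular function with compact spectrum is a pure exponential (Paley--Wiener--Schwartz plus Hadamard), so $D(\omega,\theta)=c(\theta)e^{2\pi i\lambda(\theta)\omega}$. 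The covariance $D(\omega+1,\theta)=e^{2\pi i(\alpha-\theta)}D(\omega,\theta-\alpha)$ then forces $\lambda$ to be constant by ergodicity. It also gives $c(\theta+\alpha)=\kappa e^{-2\pi i\theta}c(\theta)$, which has no nonzero $L^2$ solution when $\alpha\notin\Q$.
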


The previous theorem yields a proof of the Han-Wang conjecture. Han and Wang conjectured (implicitly in \cite{HW01} and explicitly in \cite[p. 186]{HW04}; see also \cite[p. 3095]{LM19}) that if $\alpha \in \R \setminus \Q$, then for the lattice $\Lambda_\alpha \subset \R^2$ defined by
    \begin{equation}\label{eq:han_wang_matrix}
        \Lambda_\alpha = A\Z^2, \quad
    A = \begin{pmatrix}
        1 & \alpha \\ 0 & 1
    \end{pmatrix},
    \end{equation}
    there exists \emph{no} compactly supported $g \in \lt$ such that $\mathbf G(g,\Lambda_\alpha)$ is an orthonormal basis. Theorem \ref{thm:main1} gives an immediate affirmation of this conjecture: since $\alpha$ is irrational it follows that $\pi_1(\Lambda) = \Z + \alpha \Z$ is dense in $\R$. Hence, there exists no $a > 0$ such that $\pi_1(\Lambda) = a\Z$.
    
\subsection{}
Now suppose that $\Lambda$ is a lattice with $D(\Lambda)=1$ and $\pi_1(\Lambda) = a \Z$ for some $a>0$. According to Theorem \ref{thm:main1} we can find a compactly supported $g$ such that $\mathbf{G}(g,\Lambda)$ is an orthonormal basis for $\lt$. The next theorem gives a characterization of all such $g$ that yield a basis. Hence, in combination with Theorem \ref{thm:main1}, we obtain a complete picture of when a basis exists and what windows generate the basis.

\begin{theorem}\label{thm:main2}
    Let $\Lambda \subset \R^2$ be a lattice with density $D(\Lambda) = 1$ and $\pi_1(\Lambda) = a\Z$ for some $a>0$. Further, let $g \in \lt$ be compactly supported. Then the following are equivalent.
    \begin{enumerate}
        \item $\mathbf{G}(g,\Lambda)$ is an orthonormal basis
        \item $|g|=\frac{1}{\sqrt{a}} \1_\Omega$ for a bounded measurable set $\Omega \subset \R$ that tiles the real line with $a\Z$-translates.
    \end{enumerate}
\end{theorem}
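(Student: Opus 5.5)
First I reduce to a normal form of the lattice. Since $\pi_1(\Lambda)=a\Z$, the lattice contains a vector $(a,b)$, and the kernel of $\pi_1$ restricted to $\Lambda$ is a rank-one subgroup $\{0\}\times c\Z$. Because $D(\Lambda)=1$, we get $ac=1$, so $c=1/a$ and
$$\Lambda=\{(ka,\,kb+m/a):k,m\in\Z\}.$$
Each element $(ka,kb+m/a)$ contributes the function $e^{2\pi i (kb+m/a)x}g(x-ka)$. Multiplying by the unimodular factor $e^{2\pi i kb x}$ only changes phases on each translate, so the structure is governed by the sublattice $a\Z\times\frac1a\Z$, up to the phase twists $e^{2\pi i kbx}$. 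For a fixed $k$, the family $\{e^{2\pi i m x/a}h_k\}_{m}$ with $h_k(x)=e^{2\pi i kbx}g(x-ka)$ consists of $h_k$ times an orthogonal basis of $L^2$ on any interval of length $a$.

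For the direction (2)$\Rightarrow$(1), take $|g|=a^{-1/2}\1_\Omega$ with $\Omega$ tiling by $a\Z$. Then $\Omega+ka$ is also a tiling set for $a\Z$, and $\{a^{-1/2}e^{2\pi i m x/a}\}_m$ is an orthonormal basis of $L^2(\Omega+ka)$. This holds because $\Omega+ka$ is $a$-periodically equivalent to $[0,a)$, and the exponentials are $a$-periodic. Multiplying by the unimodular function $e^{2\pi i kbx}\,\overline{\mathrm{sgn}}$-type phase of $g(\cdot-ka)$ preserves orthonormal bases. The sets $\Omega+ka$, $k\in\Z$, partition $\R$ up to null sets. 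Hence the union over $k$ is an orthonormal basis of $\bigoplus_k L^2(\Omega+ka)=\lt$.

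For the direction (1)$\Rightarrow$(2), set $\Phi(x)=\sum_k|g(x-ka)|^2$, which is a finite sum locally because $g$ is compactly supported. I plan three steps.

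\emph{Step one: completeness gives a frame identity.} For $f\in\lt$, compute $\sum_{k,m}|\langle f,e^{2\pi i(kb+m/a)\cdot}g(\cdot-ka)\rangle|^2$. For fixed $k$, this is $\sum_m|\langle F_k, e^{2\pi i m\cdot/a}\rangle|^2$ with $F_k=f\,\overline{h_k}$. Periodizing, it equals $a\int_0^a|\sum_j F_k(x+ja)|^2dx$. Taking $f$ supported in a small interval $I$ of length less than $a$, only $j$ with $I+ja$ matters, and the sum collapses to $a\sum_k\int |f|^2|g(\cdot-ka)|^2=a\int|f|^2\Phi$. Parseval then forces $\Phi=1/a$ a.e.

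\emph{Step two: orthonormality forces disjoint supports.} Orthogonality of elements with different $k$ and all $m$ gives, for $k\neq0$,
$$\langle e^{2\pi i kb x}g(x-ka),\,g\,e^{2\pi i m x/a}\rangle=0\quad\text{for all } m.$$
This says every Fourier coefficient of the $a$-periodization of $e^{2\pi ikbx}g(x-ka)\overline{g(x)}$ vanishes. Hence $\sum_j e^{2\pi i kb(x+ja)}g(x+ja-ka)\overline{g(x+ja)}=0$ a.e. This alone does not immediately give $g(\cdot-ka)\overline g=0$, and it is the main obstacle.

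My plan for the obstacle is to use $\|g\|=1$ together with $\Phi=1/a$, i.e.\ the periodization of $|g|^2$ is constant. Let $G(x)=(g(x+ja))_{j\in\Z}$, a finitely supported vector depending on $x\in[0,a)$, with $\|G(x)\|^2=1/a$. The orthogonality relations, for all $k$ including $k=0$ (where normalization in $m$ gives the same identity), say that $G(x)$ is orthogonal to its twisted shifts $(e^{2\pi ikb(x+ja)}g(x+ja-ka))_j$ for $k\neq0$. Completeness over all $k$ should force the rank-one structure. A cleaner route is the density/Zak argument: the time-shifted vectors $S^kG$ (twisted by phases) form an orthonormal basis of $\ell^2(\Z)$ after scaling by $\sqrt a$, for a.e.\ $x$. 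That is a Laurent-type unitary generated by a finitely supported vector. A finitely supported vector whose shifts are orthonormal must have the corresponding trigonometric polynomial of constant modulus, hence a single monomial. So $G(x)$ has exactly one nonzero entry, of modulus $a^{-1/2}$. This yields $|g|=a^{-1/2}\1_\Omega$ with $\Omega$ meeting each coset $x+a\Z$ exactly once, i.e.\ $\Omega$ tiles by $a\Z$. Boundedness of $\Omega$ follows from the compact support of $g$.

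The twist $e^{2\pi ikb(x+ja)}=e^{2\pi ikbx}e^{2\pi i kjab}$ multiplies the $j$-th entry by a character in $j$. Therefore I expect to handle it by conjugating with a modulation in $j$, reducing to plain shifts in the finite-support argument.
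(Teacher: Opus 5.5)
Your proof is correct, but it takes a different route from the paper. The paper never works on fibres. It writes $\Lambda=L\Z^2$ with $L$ lower triangular with diagonal entries $a,a^{-1}$; this is your normal form, with lower-left entry $b$. It then applies the metaplectic operator $U_S$, $S=L^{-1}$, which is a dilation combined with the chirp $e^{-\pi i ab t^2}$, to move the problem to $\mathbf G(U_Sg,\Z^2)$. Finally it quotes Liu's characterization for $\Z^2$ and translates back via $|U_Sg(t)|=\sqrt a\,|g(at)|$. You instead reprove Liu's theorem directly for the twisted lattice.

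The two routes are closely linked. On a coset $x+a\Z$, the chirp of $U_S$ becomes, up to a factor independent of $j$, the diagonal phase $e^{-\pi i abj^2-2\pi i bxj}$. That phase is exactly what turns your twisted shifts into plain shifts. Note that it is quadratic in $j$, not a single character, because the twist $e^{2\pi i kjab}$ depends on $k$.

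In fact you need no conjugation at all. If $n_0<n_1$ are the extreme indices of the support of $G(x)$, then the relation $\sum_j e^{2\pi i kb(x+ja)}G_{j-k}(x)\overline{G_j(x)}=0$ with $k=n_1-n_0$ has the single nonzero term $e^{2\pi i kb(x+n_1a)}G_{n_0}(x)\overline{G_{n_1}(x)}$, which is a contradiction. This argument uses only orthogonality. So your unproved assertion that the twisted shifts form an orthonormal basis of $\ell^2(\Z)$ is unnecessary. Likewise, Step one can be replaced by the $k=0$ relations $\langle g,M_{m/a}g\rangle=\delta_{m,0}$, which already give $\Phi=1/a$.

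What each approach buys: the paper's argument is two lines, given the symplectic covariance and Liu's theorem. Yours is self-contained and elementary. It also shows that for compactly supported $g$, orthonormality of $\mathbf G(g,\Lambda)$ alone forces (2), and hence, by your (2)$\Rightarrow$(1), completeness.
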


\subsection{}
If $g$ is compactly supported and $\Lambda$ is a lattice with density $D(\Lambda)=1$ and projection $\pi_1(\Lambda) = a \Z$, then from the previous statements one can further obtain that if $\mathbf{G}(g,\Lambda)$ is an orthonormal basis, then also $\mathbf{G}(g,A \times B)$ is an orthonormal basis where $A$ and $B$ are the arithmetic progressions $A = a \Z$ and $B=a^{-1}\Z$. Hence, although the lattice $\Lambda$ need not itself have a product structure, we are able to find a product set $A \times B$ so that the same window $g$ generates an orthonormal basis along $A \times B$. Iosevich and Mayeli asked in \cite[Question 4.4]{IM18} whether there exist a function $g$ and a set $\Lambda$ such that $\mathbf{G}(g,\Lambda)$ is an orthonormal basis for $\lt$, while $\mathbf{G}(g,A\times B)$ fails to be an orthonormal basis for every choice of sets $A,B\subseteq\R$. The preceding discussion shows that compactly supported windows cannot provide such an example in the lattice setting. However, the next theorem highlights that by replacing the compactly supported function $g$ with a function of unbounded support, one obtains an affirmative answer. Here, we denote by $R_\theta \in \R^{2 \times 2}$ the rotation matrix with angle $\theta$.

\begin{theorem}\label{thm:main3}
    Let $\Lambda = R_{\theta}\Z^2$ with $\theta \not \in \frac \pi2 \Z$. Then there exists $g \in \lt$ such that $\mathbf{G}(g,\Lambda)$ is an orthonormal basis for $\lt$ but $\mathbf{G}(g,A \times B)$ is not an orthonormal basis for any $A,B \subseteq \R$.
\end{theorem}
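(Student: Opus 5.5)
The plan is to use the metaplectic covariance of rotations. Let $\mathcal F_\theta$ denote the fractional Fourier transform of angle $\theta$ (with the sign convention chosen appropriately), and write $\pi(t,s)f(x)=e^{2\pi i s x}f(x-t)$. Then $\mathcal F_\theta$ is unitary on $\lt$ and satisfies $\mathcal F_\theta\,\pi(z)\,\mathcal F_\theta^{-1}=c(z)\,\pi(R_\theta z)$ with $|c(z)|=1$. Set $g:=\mathcal F_\theta \1_{[0,1]}$. Then $\mathbf G(g,R_\theta\Z^2)$ coincides, up to unimodular factors, with $\mathcal F_\theta\mathbf G(\1_{[0,1]},\Z^2)$. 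It is therefore an orthonormal basis, which gives the first half of the statement with no further work.

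For the second half, suppose $\mathbf G(g,A\times B)$ is an orthonormal basis. Orthonormality forces $V_gg(z)=0$ for every $z\in\big((A-A)\times(B-B)\big)\setminus\{0\}$, where $V_gg$ is the ambiguity function. By covariance, $|V_gg(z)|=|V_{\1}\1(R_{-\theta}z)|$ with $\1=\1_{[0,1]}$. A direct computation gives $|V_\1\1(x,\xi)|=|\sin(\pi\xi(1-|x|))/(\pi\xi)|$ for $|x|<1$ and $0$ for $|x|\ge1$. Hence the zero set is
$$Z_0=\{|x|\ge 1\}\cup\{\,|x|<1,\ \xi(1-|x|)\in\Z\setminus\{0\}\,\}.$$
So $\big((A-A)\times(B-B)\big)\setminus\{0\}\subset R_\theta Z_0$.

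\emph{Step 1 (axis constraints).} The points $(a-a',0)$ lie on the line $R_{-\theta}(\R\times\{0\})$, which has direction $(\cos\theta,-\sin\theta)$. Since $\theta\notin\frac\pi2\Z$, both coordinates of this direction are nonzero. On this line, $Z_0$ consists of all $r$ with $|r\cos\theta|\ge1$ together with finitely many values in between, and these are bounded away from $0$. Doing the same for $(0,b-b')$ shows that $A$ and $B$ are uniformly separated sets whose difference sets avoid explicit punctured neighbourhoods of $0$. In particular, $A$ has no differences in $(0,\delta_A)$ and $B$ none in $(0,\delta_B)$, with $\delta_A\cdot\delta_B$ computable in terms of $\theta$.

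\emph{Step 2 (density versus separation), the main obstacle.} Completeness together with orthonormality should force $A\times B$ to have uniform density $1$ (Ramanathan--Steger-type density theorem for Gabor Riesz bases). With separated $A$ and $B$, this means $A$ and $B$ have densities $D_A$ and $D_B$ with $D_AD_B=1$. Consequently $A$ contains differences $\alpha\approx$ at most about $1/D_A$ and $B$ contains differences $\beta$ at most about $1/D_B=D_A$. The mixed point $(\alpha,\beta)$ must then lie in $R_\theta Z_0$. I would first try to show this fails. Since $R_{-\theta}(\alpha,\beta)$ has first coordinate $\alpha\cos\theta+\beta\sin\theta$ of modulus less than $1$ for suitable choices of signs, the condition reduces to $\xi(1-|x|)$ being a nonzero integer. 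I expect to show this is violated by varying over the several small differences available, for instance $\alpha$ and $2\alpha$ from consecutive gaps, or $\pm\beta$. The idea is that the condition cuts out only a discrete family of curves, and it cannot contain all the required combinations.

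If this counting is too delicate, the fallback is to additionally exploit the marginal conditions. From $\mathbf G(g,A\times B)$ being an orthonormal basis one gets $\sum_{a\in A}|g(x-a)|^2=1/D_B$ a.e. and the analogous identity for $\hat g$. With $|g|^2$ an explicit Fresnel-type function, a periodization identity along a uniformly separated set should be impossible, because $|g|^2$ is not a tile-compatible profile. This should be checkable by Fourier analysis on $A$, using that the spectrum of the measure $\sum_{a\in A}\delta_a$ must lie in the zero set of $\widehat{|g|^2}$.
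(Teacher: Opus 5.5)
Your first half and Step~1 are correct, but the argument stops at the decisive point: Step~2 never produces a contradiction. The mixed-difference argument is only announced (``I expect to show'', ``cannot contain all the required combinations''). The claim that $|\alpha\cos\theta+\beta\sin\theta|<1$ for suitable signs is not justified. The fallback is equally unproved: it needs a periodization identity $\sum_{a\in A}|g(x-a)|^2=1/D_B$ for a non-lattice product set, followed by a spectral argument. As written, the second assertion of the theorem is not established.

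The missing step is already contained in Step~1 once you make $\delta_A,\delta_B$ explicit, and no mixed points are needed. If $|x|<1$ and $\xi(1-|x|)\in\Z\setminus\{0\}$, then $|\xi|\ge 1/(1-|x|)\ge 1$, so $Z_0\cap(-1,1)^2=\emptyset$. For $0\ne\alpha\in A-A$ the point $R_{-\theta}(\alpha,0)=(\alpha\cos\theta,-\alpha\sin\theta)$ lies in $Z_0$, so $|\alpha|\ge 1/m$ with $m=\max(|\cos\theta|,|\sin\theta|)$. The same bound holds for $0\ne\beta\in B-B$ via $R_{-\theta}(0,\beta)=(\beta\sin\theta,\beta\cos\theta)$. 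Since $\theta\notin\frac\pi2\Z$ we have $m<1$, hence $D^+(A\times B)\le m^2<1$. This contradicts the density theorem, which gives $D^-\ge1$ for every Gabor frame with arbitrary window and index set (Christensen--Deng--Heil). Your own heuristic says the same thing: gaps $\lesssim 1/D_A$ in $A$ and $\lesssim D_A$ in $B$ force $\delta_A\delta_B\le 1$, while Step~1 gives $\delta_A\delta_B\ge m^{-2}>1$.

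Completed this way, your route differs from the paper's. The paper transfers to $\mathbf G(\1_{[0,1]},R_{-\theta}(A\times B))$ and invokes the Gabardo--Lai--Wang equivalence between the orthonormal basis property and orthogonality plus tiling by $[0,1]^2$. It then uses their structure theorem for such tilings. That theorem confines $A$ and $B$ to arithmetic progressions of spacings $1/|\sin\theta|$ and $1/|\cos\theta|$, giving $D^+\le|\sin\theta\cos\theta|\le\frac12$ against the density $1$ of the explicit tiling set. Your version uses only orthogonality (the packing property) and avoids the tiling classification. In exchange, it must import the density theorem for irregular Gabor frames with an arbitrary window.
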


\section{Proof of the Han-Wang conjecture}

This section is devoted to proving the Han-Wang conjecture, which we will use to prove Theorem \ref{thm:main1}. Defining $T_tg(x)=g(x-t)$ and $M_sg(x)=e^{2\pi i s x} g(x)$, the Han-Wang conjecture can be stated as follows: if $\alpha$ is irrational, then there does not exist a compactly supported $g \in \lt$ such that $\left\{M_nT_{m+\alpha n}g:m,n\in\mathbb Z\right\}$ is an orthonormal basis for $\lt$.

\subsection{A weighted Zak transform}

In the following we use the Fourier transform convention
$$
\ft g(\xi) = \widehat{g}(\xi) = \int_\R g(x) e^{-2\pi i x \xi} \, dx. 
$$
We start by defining a function $D$ on $\R \times \T$ that can be regarded as a weighted Zak-transform of $\widehat g$.

\begin{lemma}\label{lem:direct-fibre-form}
Let $g \in \lt$ and let $\alpha \in \R$ such that $\left\{M_nT_{m+\alpha n}g:m,n\in\mathbb Z\right\}$ is an orthonormal system in $L^2(\R)$. Define a sequence $\{ \gamma_k \}_{k \in \Z}$ by $\gamma_k=e^{\pi i\alpha k(k-1)}$. Moreover, define for a.e. $\omega\in\mathbb R$,
$$
D(\omega,\theta) = \sum_{k\in\mathbb Z} \gamma_k\widehat g(k+\omega)e^{2\pi i k\theta}
$$
where the series is interpreted as an $L^2(\T)$-Fourier series in $\theta$. Then 
\begin{equation}\label{eq:direct-fibre-covariance}
D(\omega+1,\theta) = e^{2\pi i(\alpha-\theta)}D(\omega,\theta-\alpha)
\end{equation}
for a.e. $(\omega,\theta) \in \R \times \T$.
Moreover, $|D(\omega,\theta)|=1$ for a.e. $(\omega,\theta)\in\mathbb R\times\T$.
\end{lemma}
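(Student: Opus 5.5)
The plan is to prove the two assertions separately: the covariance relation \eqref{eq:direct-fibre-covariance} is a purely algebraic identity that holds for every $g\in\lt$, while the orthonormality hypothesis is used only to show $|D|=1$.

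\textbf{Covariance.} For a.e.\ $\omega$ the sequence $\{\widehat g(k+\omega)\}_k$ lies in $\ell^2(\Z)$, since $\int_0^1\sum_k|\widehat g(k+\omega)|^2\,d\omega=\|g\|_2^2$. Hence $D(\omega,\cdot)\in L^2(\T)$, and $D$ is jointly measurable by Fubini. To get \eqref{eq:direct-fibre-covariance}, I will reindex:
$$D(\omega+1,\theta)=\sum_k\gamma_{k-1}\widehat g(k+\omega)e^{2\pi i(k-1)\theta}.$$
A direct computation gives $\gamma_{k-1}=\gamma_k e^{-2\pi i\alpha(k-1)}$, because $(k-1)(k-2)-k(k-1)=-2(k-1)$. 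Substituting, the $k$-th term becomes
$$e^{2\pi i(\alpha-\theta)}\,\gamma_k\widehat g(k+\omega)e^{2\pi i k(\theta-\alpha)}.$$
Reindexing the coefficients, translating in $\theta$, and multiplying by a unimodular function are all isometries of $L^2(\T)$, so this termwise identity holds at the level of $L^2$-series. This gives \eqref{eq:direct-fibre-covariance} for a.e.\ $(\omega,\theta)$.

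\textbf{Modulus.} First I translate orthonormality into fibre identities. Since $\Lambda_\alpha$ is a lattice, $\langle M_nT_{m+\alpha n}g,\,M_{n'}T_{m'+\alpha n'}g\rangle$ equals a unimodular constant times $\langle M_{n-n'}T_{(m-m')+\alpha(n-n')}g,\,g\rangle$. So orthonormality is equivalent to
$$\langle M_nT_{m+\alpha n}g,g\rangle=\delta_{m0}\delta_{n0}\qquad\text{for all } m,n\in\Z.$$
On the Fourier side, $\ft(M_sT_tg)(\xi)=e^{-2\pi i t(\xi-s)}\widehat g(\xi-s)$. Set
$$h_n(\xi)=\widehat g(\xi-n)\overline{\widehat g(\xi)}\,e^{-2\pi i\alpha n(\xi-n)}\in L^1(\R).$$
Then the inner product equals $\int h_n(\xi)e^{-2\pi i m\xi}\,d\xi$. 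This is the $m$-th Fourier coefficient of the $1$-periodization $F_n(\omega)=\sum_j h_n(\omega+j)$, and the sum converges in $L^1[0,1]$. Since all Fourier coefficients of $F_n$ are known, uniqueness gives $F_n=\delta_{n0}$ a.e. In particular $\sum_j|\widehat g(\omega+j)|^2=1$ a.e.

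Next I compute the Fourier coefficients of $|D(\omega,\cdot)|^2\in L^1(\T)$. By Parseval, the sums below converge absolutely by Cauchy--Schwarz, and the $n$-th coefficient is
$$c_n(\omega)=\sum_l\gamma_{l+n}\overline{\gamma_l}\,\widehat g(l+n+\omega)\overline{\widehat g(l+\omega)},\qquad \gamma_{l+n}\overline{\gamma_l}=e^{\pi i\alpha(2ln+n^2-n)}.$$
Now compare this with $\overline{F_{-n}(\omega)}$ written out with $j=l$. Its $l$-th term is $\widehat g(\omega+l+n)\overline{\widehat g(\omega+l)}$ times the phase $e^{2\pi i\alpha n(\omega+l+n)}$. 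The ratio of the two phases is $e^{2\pi i\alpha n(\omega+n)}e^{-\pi i\alpha(n^2-n)}$, which does not depend on $l$. Hence $c_n(\omega)$ is a unimodular multiple of $\overline{F_{-n}(\omega)}=\delta_{n0}$, so $c_n=\delta_{n0}$ a.e. Therefore $|D(\omega,\theta)|^2=1$ for a.e.\ $\theta$ and a.e.\ $\omega$, and Fubini upgrades this to a.e.\ $(\omega,\theta)$.

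\textbf{Main obstacle.} The delicate part is the bookkeeping in the modulus step. One must check that the quadratic phases $\gamma_k$ are exactly what is needed to make the phase discrepancy independent of $l$. One must also justify the periodization and the interchange of sums (via $h_n\in L^1$ and Cauchy--Schwarz on the fibres), and handle the countably many null sets indexed by $n$ uniformly.
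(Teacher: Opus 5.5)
Your proposal is correct and follows essentially the same route as the paper: you get the covariance relation by reindexing with $\gamma_{k-1}=\gamma_k e^{-2\pi i\alpha(k-1)}$, and $|D|=1$ by periodizing $\langle M_nT_{m+\alpha n}g,g\rangle$ and matching the resulting functions, up to an $l$-independent unimodular phase, with the Fourier coefficients of $|D(\omega,\cdot)|^2$ (the autocorrelations). The differences are cosmetic. You obtain $|D|=1$ directly for a.e. $\omega\in\R$ from the $1$-periodicity of $F_n$, whereas the paper works on $[0,1]$ and then transfers via the covariance relation. Also, the sum you describe termwise is $F_{-n}(\omega)$ rather than $\overline{F_{-n}(\omega)}$; this is harmless because $F_{-n}=\delta_{n0}$ is real.
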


\begin{proof}
Define the sequences $h(\omega)=\{h_k(\omega)\}_{k\in\Z}$ and $b(\omega)=\{b_k(\omega)\}_{k\in\Z}$ by
$$
h_k(\omega)=\widehat g(k+\omega), \quad b_k(\omega)=\gamma_k h_k(\omega).
$$
Since $\widehat g\in L^2(\mathbb R)$, it follows that for a.e. $\omega \in [0,1]$, the sequence $h(\omega)$ belongs to $\ell^2(\Z)$. Since $|\gamma_k|=1$, the same is true for $b(\omega)$.
For $n\in\mathbb Z$, define
$$
r_n(\omega) = \sum_{k\in\mathbb Z} b_{k-n}(\omega)\overline{b_k(\omega)}.
$$
This series is absolutely convergent for a.e. $\omega \in [0,1]$. Indeed, by the Cauchy-Schwarz inequality
$$
\sum_{k\in\mathbb Z} |b_{k-n}(\omega)||b_k(\omega)| \leq \left(\sum_{k\in\mathbb Z}|b_{k-n}(\omega)|^2\right)^{1/2} \left(\sum_{k\in\mathbb Z}|b_k(\omega)|^2\right)^{1/2} = \sum_{k\in\mathbb Z}|b_k(\omega)|^2.
$$
Since $|b_k(\omega)|=|h_k(\omega)|$, it follows from the estimate
$$
\int_0^1 |r_n(\omega)| \, d\omega \leq \int_0^1  \sum_{k\in\mathbb Z}|h_k(\omega)|^2 \, d\omega = \| g \|^2
$$
that $r_n \in L^1(0,1)$. The identity
$$
\ft M_nT_{m+\alpha n}g(\xi) = e^{-2\pi i(m+\alpha n)(\xi-n)} \ft g(\xi-n),
$$
in combination with orthonormality and Plancherel's theorem gives
$$
\delta_{m,0}\delta_{n,0} = \int_{\mathbb R} e^{-2\pi i(m+\alpha n)(\xi-n)} \widehat g(\xi-n)\overline{\widehat g(\xi)}\,d\xi .
$$
Equivalently, we have
$$
\delta_{m,0}\delta_{n,0} = \int_{\mathbb R} e^{-2\pi i(m+\alpha n)\xi} \widehat g(\xi-n)\overline{\widehat g(\xi)}\,d\xi .
$$
Periodization of the integral and using the definition of
$h(\omega)$, we get
$$
\delta_{m,0}\delta_{n,0} = \int_0^1 e^{-2\pi i m\omega} e^{-2\pi i\alpha n\omega} e^{2\pi i\alpha n^2} \sum_{k\in\mathbb Z} e^{-2\pi i\alpha nk} h_{k-n}(\omega)\overline{h_k(\omega)} \,d\omega .
$$
The elementary identity $\gamma_{k-n}\overline{\gamma_k} = e^{-2\pi i\alpha nk}e^{\pi i\alpha n(n+1)}$ shows that the series under the previous integral satisfies the relation
$$
\sum_{k\in\mathbb Z} e^{-2\pi i\alpha nk} h_{k-n}(\omega)\overline{h_k(\omega)} = e^{-\pi i\alpha n(n+1)}r_n(\omega).
$$
Consequently,
$$
\delta_{m,0}\delta_{n,0} = e^{\pi i\alpha n(n-1)} \int_0^1 e^{-2\pi i m\omega} e^{-2\pi i\alpha n\omega} r_n(\omega)\,d\omega.
$$
Since the prefactor is nonzero, we obtain
$$
\int_0^1 e^{-2\pi i m\omega} \bigl(e^{-2\pi i\alpha n\omega}r_n(\omega)\bigr) \,d\omega = \delta_{m,0}\delta_{n,0}, \quad  m\in\mathbb Z.
$$
We have shown above that $r_n \in L^1(0,1)$. Therefore the function
$$
e^{-2\pi i\alpha n\omega}r_n(\omega)
$$
belongs to $L^1(0,1)$. Hence, by uniqueness of Fourier coefficients,
$$
e^{-2\pi i\alpha n\omega}r_n(\omega)=\delta_{n,0}
$$
for a.e. $\omega\in[0,1]$. Thus, after removing one null set independent of $n$,
$$
r_n(\omega)=\delta_{n,0}, \quad n \in \mathbb Z,
$$
for a.e. $\omega\in[0,1]$.
For such an $\omega$, define
$$
D(\omega,\theta) = \sum_{k\in\mathbb Z}b_k(\omega)e^{2\pi i k\theta}
$$
as an $L^2(\T)$-Fourier series. Then $|D(\omega,\cdot)|^2\in L^1(\T)$. Its Fourier coefficients are the autocorrelations of $b(\omega)$,
$$
\int_0^1 |D(\omega,\theta)|^2e^{2\pi i n\theta}\,d\theta = \sum_{k\in\mathbb Z} b_{k-n}(\omega)\overline{b_k(\omega)} = r_n(\omega) = \delta_{n,0}.
$$
Thus all Fourier coefficients of the $L^1$-function $|D(\omega,\cdot)|^2$ agree with those of the constant function $1$. By uniqueness of Fourier coefficients, $|D(\omega,\theta)|^2=1$ for a.e. $\theta\in \T $. Consequently, $|D(\omega,\theta)|=1$ for a.e. $(\omega,\theta)\in[0,1]\times \T$.
Further, we have
$$
\begin{aligned}
D(\omega+1,\theta)
&=
\sum_{k\in\mathbb Z}
\gamma_k\widehat g(k+\omega+1)e^{2\pi i k\theta}  \\
&=
\sum_{j\in\mathbb Z}
\gamma_{j-1}\widehat g(j+\omega)e^{2\pi i(j-1)\theta}  \\
&=
e^{2\pi i(\alpha-\theta)}
\sum_{j\in\mathbb Z}
\gamma_j\widehat g(j+\omega)e^{2\pi i j(\theta-\alpha)}  \\
&=
e^{2\pi i(\alpha-\theta)}D(\omega,\theta-\alpha).
\end{aligned}
$$
Finally, since the factor $e^{2\pi i(\alpha-\theta)}$ has modulus one, this identity transfers $|D|=1$ from $[0,1]\times \T$ to every shift $(r+[0,1])\times \T$ with $r\in\mathbb Z$, implying that $|D(\omega,\theta)|=1$ for a.e. $(\omega,\theta)\in\mathbb R\times \T$.
\end{proof}

For a measurable function $F : \R \to \C$ with polynomial growth, we denote by $T_F \in \mathcal S'(\R)$ the tempered distribution induced by $F$ through the pairing
$$
        \langle T_F,\varphi\rangle
        =
        \int_{\mathbb R}F(t)\varphi(t)\,dt,
        \quad \varphi\in\mathcal S(\mathbb R),
$$
where $\mathcal{S}(\R)$ denotes the Schwartz space on the real line.
Moreover, we denote by $\ft : \mathcal S'(\R) \to \mathcal{S}'(\R)$ the distributional Fourier transform, which for $V \in \mathcal{S}'(\R)$ is defined by the action
$
\langle \ft V,\varphi \rangle = \langle V, \widehat \varphi \rangle.
$
Recall that if $U \subseteq \R$ is an open set and $V \in \mathcal{S}'(\R)$, then $V$ vanishes on $U$ if for every $\varphi \in C_c^\infty(U)$ we have $\langle V, \varphi \rangle = 0$. The support of $V$ is denoted by $\supp(V)$ and is defined as the complement of the set
$$
\bigcup \left \{ U \subseteq \R : U \, \text{open}, \ V \ \text{vanishes on} \ U \right \}.
$$

\begin{lemma}\label{lem:fibre-compact-spectrum}
Let $K\subseteq \R$ be compact, $g \in L^2(\R)$ with $\supp (g) \subseteq K$ and let $\alpha \in \R$. Suppose that $\left\{M_nT_{m+\alpha n}g : m,n \in \Z \right\}$ is orthonormal and let $D$ be defined as in Lemma \ref{lem:direct-fibre-form}. Then, for almost every $\theta \in \T$, the distributional Fourier transform of $\omega\mapsto D(\omega,\theta)$ is supported in $-K$.
\end{lemma}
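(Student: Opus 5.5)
Formally, $D(\omega,\theta)=\sum_k \gamma_k e^{2\pi i k\theta}\widehat g(k+\omega)$ is a superposition of translates of $\widehat g$. The Fourier transform of $\omega\mapsto\widehat g(\omega+k)$ is $e^{2\pi i k t}\,g(-t)$, up to the sign convention, and is supported in $-K$. So the Fourier transform of $D(\cdot,\theta)$ should be $g(-t)\sum_k \gamma_k e^{2\pi i k(\theta+t)}$, which is again supported in $-K$. The difficulty is that this sum converges only in a weak sense, so we work by testing against $\varphi\in C_c^\infty(\R\setminus(-K))$.

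\textbf{Step 1: $D$ defines a tempered distribution.} By Lemma \ref{lem:direct-fibre-form}, $|D|=1$ a.e. Hence for a.e. $\theta$ the function $D(\cdot,\theta)$ is bounded (after a Fubini argument), and $T_{D(\cdot,\theta)}\in\mathcal S'(\R)$.

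\textbf{Step 2: testing in both variables.} Fix $\varphi\in C_c^\infty(U)$ with $U$ open and $\overline U\cap(-K)=\emptyset$. Rather than working with a single $\theta$, we pair with a trigonometric polynomial $p(\theta)=e^{-2\pi i j\theta}$ and consider
$$
I_j=\int_\T\int_\R D(\omega,\theta)\,\widehat\varphi(\omega)\,e^{-2\pi i j\theta}\,d\omega\,d\theta .
$$
For a.e. $\omega$, the $L^2(\T)$ Fourier series in $\theta$ gives $\int_\T D(\omega,\theta)e^{-2\pi i j\theta}\,d\theta=\gamma_j\widehat g(j+\omega)$. This inner integral is bounded by $1$, and $\widehat\varphi\in L^1$, so Fubini applies and
$$
I_j=\gamma_j\int_\R \widehat g(j+\omega)\,\widehat\varphi(\omega)\,d\omega .
$$
By Parseval this equals, up to a unimodular factor, $\int_\R g(x)\,e^{2\pi i jx}\,\varphi(-x)\,dx$ (adjusting signs to the conventions). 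The integrand vanishes identically, since $\varphi(-x)=0$ for $x\in K$ and $g$ vanishes off $K$. Hence $I_j=0$ for all $j$.

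\textbf{Step 3: conclude for a.e. $\theta$.} The function $\theta\mapsto\int_\R D(\omega,\theta)\widehat\varphi(\omega)\,d\omega$ lies in $L^\infty(\T)$ and all its Fourier coefficients vanish, so it is zero for a.e. $\theta$. The exceptional null set depends on $\varphi$. To remove this dependence, take a countable family $\{\varphi_\ell\}$ that is dense in $C_c^\infty(\R\setminus(-K))$ in the following sense: for each compact subset $L$, the members supported in $L$ are dense in $C_c^\infty(L)$ in the Schwartz topology. Since $\widehat{\varphi_\ell}\to\widehat\varphi$ in $L^1$ whenever $\varphi_\ell\to\varphi$ in $\mathcal S$, and $D(\cdot,\theta)$ is bounded, off one null set of $\theta$ we get $\langle \ft T_{D(\cdot,\theta)},\varphi\rangle=0$ for all $\varphi\in C_c^\infty(\R\setminus(-K))$. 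This is exactly the statement that $\supp \ft T_{D(\cdot,\theta)}\subseteq -K$.

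The main obstacle is Step 2. We need the Fubini interchange (which relies on $|D|=1$ and on $\widehat\varphi\in L^1$), and we need to track the sign conventions carefully so that the support really lands in $-K$ rather than $K$. The countable density argument in Step 3 is routine.
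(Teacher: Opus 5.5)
Your proposal is correct and follows essentially the same route as the paper: you compute the Fourier coefficients in $\theta$ of $\theta\mapsto\int_\R D(\omega,\theta)\widehat\varphi(\omega)\,d\omega$ via Fubini (justified by $|D|=1$ and $\widehat\varphi\in L^1$), show they vanish by Plancherel and the disjointness of $\supp g$ and $\supp\varphi(-\cdot)$, and then remove the $\varphi$-dependent null set with a countable dense family. The differences are cosmetic. You pass to the limit via $\widehat{\varphi_\ell}\to\widehat\varphi$ in $L^1(\R)$ together with boundedness of $D(\cdot,\theta)$, whereas the paper uses continuity of $\ft T_\theta$ on $C^\infty_{L_j}$; and your countable family is most cleanly chosen, as in the paper, relative to an exhaustion of $\R\setminus(-K)$ by compacts $L_j$ rather than for every compact $L$.
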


\begin{proof}
For almost every $\theta$, the function $\omega\mapsto D(\omega,\theta)$ defines a tempered distribution. Call this distribution $T_\theta$.
We show that $\operatorname{supp}(\ft T_\theta)\subseteq -K$
for a.e. $\theta$. To do so, let
$
U=\mathbb R\setminus(-K).
$
It suffices to prove that, for a.e. $\theta$,
$$
\langle \ft T_\theta ,\psi\rangle = \int_{\mathbb R}\widehat\psi(\omega)D(\omega,\theta)\,d\omega =0
\qquad
\text{for every } \psi\in C_c^\infty(U).
$$
For $\psi \in C_c^\infty(U)$ define $I_\psi$ via
$$
I_\psi(\theta)
=
\int_{\mathbb R}\widehat\psi(\omega)D(\omega,\theta)\,d\omega.
$$
Since $|D|=1$ almost everywhere by Lemma \ref{lem:direct-fibre-form}
and $\widehat{\psi}\in L^1(\mathbb R)$, the integral defining $I_\psi$
is absolutely convergent for almost every $\theta$, and
$
|I_\psi(\theta)|
\leq
\|\widehat\psi\|_{L^1(\mathbb R)} .
$
Hence $I_\psi\in L^\infty(\mathbb T)\subset L^2(\mathbb T)$. Next, we determine the Fourier coefficients $\{ a_k \}_{k \in \Z}$ of $I_\psi$.
For $k\in\mathbb Z$, Fubini's theorem gives
\begin{equation}
\begin{aligned}
a_k = \int_0^1 I_\psi(\theta)e^{-2\pi i k\theta}\,d\theta
&=
\int_{\mathbb R}\widehat\psi(\omega)
\left(
\int_0^1 D(\omega,\theta)e^{-2\pi i k\theta}\,d\theta
\right)
d\omega  \\
&=
\gamma_k
\int_{\mathbb R}\widehat\psi(\omega)\widehat g(k+\omega)\,d\omega .
\end{aligned}
\end{equation}
Here we used that, for almost every $\omega$, $D(\omega,\cdot)$ is the
$L^2(\mathbb T)$-Fourier series whose $k$-th Fourier coefficient is
$\gamma_k\widehat g(k+\omega)$.
A change of variables yields
$$
a_k = \gamma_k \int_\R \widehat\psi(\omega-k) \widehat g(\omega) d \omega.
$$
and by Plancherel, we have
$$
\int_\R \widehat\psi(\omega-k) \widehat g(\omega) d \omega =
\int_\R g(x) e^{-2\pi i k x} \psi(-x) \, dx.
$$
Since $\psi \in C_c^\infty(U)$, the supports of $g$ and $\psi(-\cdot)$ are disjoint and therefore $a_k=0$. Since $k$ was arbitrary, we have $I_\psi = 0$. Hence for each \emph{fixed} $\psi \in C_c^\infty(U)$ we have
$$
\langle \widehat{T_\theta}, \psi \rangle = 0
$$
for almost every $\theta$. In other words, for every $\psi \in C_c^\infty(U)$ there exists a set $\Omega(\psi) \subseteq \T$ of full Lebesgue measure such that
$$
\langle \widehat{T_\theta}, \psi \rangle = 0, \quad \theta \in \Omega(\psi).
$$
It remains to remove the dependence of the full measure set on $\psi$.

To do so, we argue as follows:
since $U$ is open and $\sigma$-compact, choose compact sets
$$
L_j=\{x\in U: |x|\le j,\ \operatorname{dist}(x,-K)\ge 1/j\},
\qquad j\in\mathbb N,
$$
so that every compact subset of $U$ is contained in some $L_j$. For each
$j$, the space
$
C_{L_j}^\infty(\mathbb R)
:=
\{\psi\in C_c^\infty(\mathbb R):\operatorname{supp}\psi\subseteq L_j\}
$
is separable in its usual Fréchet topology. Choose a countable dense subset
$\mathcal A_j\subset C_{L_j}^\infty(\mathbb R)$, and put
$$
\mathcal A=\bigcup_{j=1}^\infty \mathcal A_j .
$$
Since countable intersections of sets of full measure are of full measure, the preceding argument shows that there exists a single
full-measure set $\Omega\subseteq\mathbb T$ such that
$$
\langle \widehat{T_\theta},\psi \rangle = 0
$$
for every $\psi\in\mathcal A$ and every $\theta\in\Omega$.

Fix $\theta\in\Omega$. Let $\psi\in C_c^\infty(U)$. Then
$\operatorname{supp}\psi\subseteq L_j$ for some $j$. Choose
$\psi_n\in\mathcal A_j$ with $\psi_n\to\psi$ in
$C_{L_j}^\infty(\mathbb R)$. Since $\ft T_\theta$ is a distribution,
it is continuous on $C_{L_j}^\infty(\mathbb R)$. Therefore
$$
\langle \ft T_\theta,\psi\rangle
=
\lim_{n\to\infty}
\langle \ft T_\theta,\psi_n\rangle
=
0.
$$
Hence $\ft T_\theta$ vanishes on $C_c^\infty(U)$. Therefore $\operatorname{supp}(\ft T_\theta) \subseteq \mathbb R\setminus U=-K$, which proves the claim.
\end{proof}

\subsection{A complex analysis argument}

We now invoke an argument from complex analysis, where we make use of the following well-known fact which is a direct consequence of the Hadamard factorization theorem. Recall that an entire function $H : \C \to \C$ is said to be of exponential type, if there exists $C,c>0$ such that
$$
|H(z)| \leq C e^{c|z|}, \quad z \in \C.
$$

\begin{lemma}\label{lem:zero-free-exp-type}
Let $H$ be an entire function of exponential type with no zeros in $\mathbb C$. Then there exist $C\in \mathbb C\setminus\{0\}$ and $a\in\mathbb C$ such that $H(z)=Ce^{az}$.
\end{lemma}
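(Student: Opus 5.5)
Since $H$ has no zeros and $\C$ is simply connected, I will first take a holomorphic logarithm: there is an entire function $\phi$ with $H = e^{\phi}$. It then suffices to show that $\phi$ is a polynomial of degree at most one. Writing $\phi(0)=\log C$ for a suitable branch, this gives $H(z)=Ce^{az}$ with $C\neq 0$.

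To control $\phi$, I will use the growth bound. From $|H(z)|\le Ce^{c|z|}$ we get
$$
\operatorname{Re}\phi(z)=\log|H(z)|\le \log C + c|z|, \qquad z\in\C .
$$
So the real part of $\phi$ has at most linear growth from above. The key step is the Borel--Carathéodory inequality. On the disk $|z|\le r$ it yields
$$
\max_{|z|\le r}|\phi(z)| \le \frac{2r}{R-r}\max_{|z|\le R}\operatorname{Re}\phi(z) + \frac{R+r}{R-r}|\phi(0)|.
$$
Choosing $R=2r$ then gives $|\phi(z)| = O(|z|)$ as $|z|\to\infty$. Cauchy's estimates on circles of radius $r$ show that $\phi^{(k)}(0)=0$ for all $k\ge 2$ once we let $r\to\infty$. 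Hence $\phi(z)=\phi(0)+az$, and the claim follows.

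Alternatively, I would phrase the argument through Hadamard factorization, as the paper suggests. An entire function of exponential type has order at most $1$. Since $H$ has no zeros, the canonical product is trivial, so $H=e^{P}$ with $P$ a polynomial of degree at most the order, which is at most $1$. I expect this citation to be acceptable as "well-known", but I would include the Borel--Carathéodory route above to keep the proof self-contained.

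The only mildly delicate point is that exponential type controls only $\operatorname{Re}\phi$ from above, not $|\phi|$. The Borel--Carathéodory step is exactly what converts the one-sided bound into a two-sided one, and it is the main point to check.
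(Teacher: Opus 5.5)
Your proof is correct. The paper gives no argument beyond calling the lemma a direct consequence of Hadamard's factorization theorem, and your second paragraph is exactly that: exponential type gives order at most $1$, the absence of zeros removes the canonical product, and the exponent is a polynomial of degree at most $1$.

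Your main route is genuinely more elementary. It avoids the full factorization theorem, with its canonical products, genus and convergence exponent of the zeros. You take a holomorphic logarithm $\phi$ of $H$ and turn the one-sided bound $\operatorname{Re}\phi(z)\le \log C + c|z|$ into $|\phi(z)|=O(|z|)$ via Borel--Carath\'eodory with $R=2r$. Cauchy's estimates then finish the argument. This is essentially how the zero-free case of Hadamard's theorem is proved in textbooks, so you are inlining the only part of Hadamard that the lemma needs. It also gives the more general statement that $\operatorname{Re}\phi(z)\le A+B|z|^n$ forces $\phi$ to be a polynomial of degree at most $n$.

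The Borel--Carath\'eodory inequality in the form you state holds with no sign condition on $\max\operatorname{Re}\phi$. To see this, apply the $f(0)=0$ case to $\phi-\phi(0)$ and use the mean value property. So the step you flagged as delicate is fine.

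One cosmetic point: you use the letter $C$ both for the growth constant and for the constant in the conclusion. Rename one of them.
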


Using the previous result we obtain the following statement.

\begin{lemma}\label{lem:bounded-unimodular-compact-spectrum}
Let $F\in L^\infty(\mathbb R)$ satisfy $|F(t)|=1$ for almost every $t\in\mathbb R$. Suppose that the distributional Fourier
transform $\widehat F$ has compact support. Then there exist
$\lambda\in\supp(\widehat F)$ and $c\in\mathbb C$, with $|c|=1$, such that $F(t)=ce^{2\pi i\lambda t}$ for almost every $t\in\mathbb R$.
\end{lemma}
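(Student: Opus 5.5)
The plan is to use Paley--Wiener--Schwartz to extend $F$ to an entire function of exponential type, show that this extension has no zeros, and then apply Lemma \ref{lem:zero-free-exp-type}.

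\emph{Step 1: analytic extension.} Since $\widehat F$ is a compactly supported distribution, say $\supp(\widehat F)\subseteq[-R,R]$, the Paley--Wiener--Schwartz theorem gives an entire function
$$
\Phi(z)=\langle \widehat F, e^{2\pi i z\,\cdot}\rangle
$$
(up to the sign convention of the Fourier transform). It satisfies $|\Phi(z)|\le C(1+|z|)^N e^{2\pi R|\operatorname{Im} z|}$ and $\Phi=F$ a.e. on $\R$. Because $F$ is bounded on $\R$, Phragmén--Lindelöf upgrades this to $|\Phi(x+iy)|\le e^{2\pi R|y|}$. In particular $\Phi$ is of exponential type, and $|\Phi(x)|=1$ for every real $x$ by continuity.

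\emph{Step 2: no zeros (the main obstacle).} The idea is reflection. Define $\Phi^*(z)=\overline{\Phi(\bar z)}$, which is also entire of exponential type. Then $\Phi(z)\Phi^*(z)$ is entire and equals $|\Phi(x)|^2=1$ on $\R$, so by the identity theorem $\Phi(z)\Phi^*(z)=1$ for all $z$. Hence $\Phi$ has no zeros, and Lemma \ref{lem:zero-free-exp-type} gives $\Phi(z)=Ce^{az}$.

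\emph{Step 3: identify the form.} Since $|Ce^{ax}|=1$ for all real $x$, we get $\operatorname{Re}a=0$ and $|C|=1$. Write $a=2\pi i\lambda$ with $\lambda\in\R$. Then $F(t)=ce^{2\pi i\lambda t}$ a.e. with $|c|=1$.

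\emph{Step 4: locate $\lambda$.} The distributional Fourier transform of $ce^{2\pi i\lambda t}$ is $c\,\delta_{\lambda}$, whose support is $\{\lambda\}$. Therefore $\lambda\in\supp(\widehat F)$.

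The delicate point is Step 1, namely making sure the growth bound holds in all directions so that $\Phi$ qualifies as exponential type. The polynomial factor $(1+|z|)^N$ is harmless here, since $(1+|z|)^N e^{2\pi R|z|}\le C' e^{(2\pi R+1)|z|}$. So Phragmén--Lindelöf is not strictly needed, and I will rely directly on the Paley--Wiener--Schwartz estimate.
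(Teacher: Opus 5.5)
Your proposal is correct and follows essentially the same route as the paper: a Paley--Wiener--Schwartz extension to an entire function of exponential type, the reflection identity $\Phi(z)\overline{\Phi(\bar z)}\equiv 1$ to rule out zeros, Lemma~\ref{lem:zero-free-exp-type}, and then $|\Phi|=1$ on $\R$ to force $|C|=1$ and $\operatorname{Re}a=0$. Your Step 4, computing $\widehat F=c\,\delta_\lambda$, spells out what the paper dismisses with ``clearly''. The Phragm\'en--Lindel\"of remark is correct but, as you note yourself, not needed.
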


\begin{proof}
Let $T_F \in \mathcal S'(\R)$ be the tempered distribution induced by $F$.
Let $U = \ft T_F$ be the Fourier transform of $T_F$.
By assumption, $U$ is compactly supported. Choose $\sigma\geq 0$ such that
$
        \operatorname{supp} U\subset [-\sigma,\sigma].
$
By the Paley-Wiener-Schwartz theorem for compactly supported distributions
\cite[Theorem 7.3.1]{H83}, the inverse Fourier transform
$\mathcal F^{-1}U$ is represented on $\mathbb R$ by the restriction of an entire function
$$
        f(z):=\big\langle U(\xi),e^{2\pi i z\xi}\big\rangle,
        \quad z\in\mathbb C.
$$
Moreover, Paley-Wiener-Schwartz gives constants $C_0>0$ and $N \in \mathbb N$ such
that
$$
        |f(z)|
        \leq
        C_0(1+|z|)^N e^{2\pi\sigma|\operatorname{Im}z|},
        \quad z\in\mathbb C .
$$
In particular, $f$ is an entire function of exponential type.
Since $\mathcal F^{-1}U=T_F$, the restriction of $f$ to $\mathbb R$
represents $F$ as a distribution. Hence $F=f$ almost everywhere. Since
$|f(t)|= |F(t)| = 1$ for a.e. $t\in\mathbb R$ and since $f$ is continuous, the last identity improves to $|f(t)|=1$ for every $t\in\mathbb R$.
Now define
$$
      G : \C \to \C, \quad  G(z):=f(z)\overline{f(\overline z)}.
$$
Then $G$ is an entire function and for every real $t$ we have
$$
        G(t)=f(t)\overline{f(t)}=|f(t)|^2=1.
$$
By the identity theorem for holomorphic functions, we have $G(z)= 1$ for every $z \in \C$.
In particular, $f$ has no zeros in $\mathbb C$.
We may therefore apply Lemma~\ref{lem:zero-free-exp-type} to $f$. Hence
there exist $C \in \mathbb C\setminus\{0\}$ and $a\in\mathbb C$ such that
$$
        f(z)=C e^{az},
        \quad z\in\mathbb C .
$$
Using $|f(t)|=1$ for every real $t$, we get
$$
        1=|f(t)|=|C|e^{\operatorname{Re}(a) t},
        \quad t\in\mathbb R .
$$
Taking $t=0$ gives $|C|=1$. Therefore,
$$
        e^{\operatorname{Re}(a)t}=1,
        \quad t\in\mathbb R,
$$
which forces $\operatorname{Re}(a)=0$. Hence $a=i\alpha$ for some
$\alpha\in\mathbb R$. Writing $\alpha=2\pi\lambda$ with $\lambda\in\mathbb R$ and setting $c=C$, we obtain
$$
        f(t)=ce^{2\pi i\lambda t},
        \quad t\in\mathbb R,
$$
with $|c|=1$. Since $F=f$ almost everywhere on $\mathbb R$, it follows
that
$
        F(t)=ce^{2\pi i\lambda t}
$
for almost every $t\in\mathbb R$. Clearly we have $\lambda \in \supp(\ft F)$.
\end{proof}

\subsection{Proof of the Han-Wang conjecture}

Up to this point, all results in this section have been established for an arbitrary $\alpha \in \R$; in particular, no irrationality assumption has been imposed. To prove the Han–Wang conjecture, we now combine the preceding results with the following observation, in which the irrationality of $\alpha$ plays a crucial role.

\begin{lemma}\label{lem:irrational-torus-obstructions}
Assume $\alpha \in \R \setminus \Q$.
\begin{enumerate}
\item If $f\in L^2(\mathbb T)$ satisfies $f(\theta)=f(\theta-\alpha)$ almost everywhere, then $f$ is constant almost everywhere.

\item If $c\in L^2(\mathbb T)$ satisfies
\begin{equation}\label{eq:c_relation}
c(\theta+\alpha)=\kappa e^{-2\pi i\theta}c(\theta)
\end{equation}
almost everywhere for some $|\kappa|=1$, then $c=0$.
\end{enumerate}
\end{lemma}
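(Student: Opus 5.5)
Both parts reduce to statements about Fourier coefficients on $\T=\R/\Z$. In each case we compare the coefficients of the two sides of the given identity. Throughout we write $\hat f(k)=\int_0^1 f(\theta)e^{-2\pi i k\theta}\,d\theta$, and use that translation by $\alpha$ is an isometry of $L^2(\T)$.

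\emph{Part (1).} Taking the $k$-th Fourier coefficient of both sides of $f(\theta)=f(\theta-\alpha)$ gives
$$\hat f(k)=e^{-2\pi i k\alpha}\hat f(k), \quad k\in\Z.$$
For $k\neq 0$, irrationality of $\alpha$ gives $k\alpha\notin\Z$, so $e^{-2\pi i k\alpha}\neq 1$ and hence $\hat f(k)=0$. Therefore $f=\hat f(0)$ almost everywhere, by uniqueness of Fourier coefficients in $L^2(\T)$.

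\emph{Part (2).} We compute coefficients again. The left-hand side $c(\cdot+\alpha)$ has $k$-th coefficient $e^{2\pi i k\alpha}\hat c(k)$. Multiplication by $e^{-2\pi i\theta}$ shifts coefficients, so the right-hand side has $k$-th coefficient $\kappa\,\hat c(k+1)$. This yields the recursion
$$\hat c(k+1)=\overline{\kappa}\,e^{2\pi i k\alpha}\,\hat c(k), \quad k\in\Z.$$
Since $|\kappa|=1$, it follows that $|\hat c(k+1)|=|\hat c(k)|$ for all $k$. Thus $|\hat c(k)|$ is a constant $r$, independent of $k$. But $\hat c\in\ell^2(\Z)$ by Parseval, and a constant-modulus sequence is square-summable only if $r=0$. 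Hence $c=0$.

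Part (2) does not actually use irrationality; only part (1) does. The one point needing care is that the coefficient computations are valid. Translation and multiplication by a unimodular character are bounded operators on $L^2(\T)$, so the coefficient identities hold exactly. We should also note that "$c(\theta+\alpha)$" is read modulo $1$, which is consistent because $c$ is defined on $\T$. No other obstacle is expected.
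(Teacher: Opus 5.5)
Your proof is correct and follows essentially the same route as the paper. Part (2) is the identical Fourier-coefficient recursion $e^{2\pi i k\alpha}\hat c(k)=\kappa\,\hat c(k+1)$, which forces constant modulus and hence $c=0$ by square-summability; for part (1) the paper simply cites the standard argument in Einsiedler--Ward, which is exactly the coefficient comparison you wrote out (and you are right that irrationality is needed only in part (1)).
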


\begin{proof}
The first part of the statement is well-known, see for example the proof of \cite[Proposition 2.16]{EW12}.
For the second statement, let $c \in L^2(\T)$ and denote by $c_m$ its $m$-th Fourier coefficient,
$$
c_m=\int_0^1 c(x)e^{-2\pi i m x}\,dx .
$$
Then the equality in \eqref{eq:c_relation} implies that the Fourier coefficients of $c$ satisfy the relation
$$
e^{2\pi i m\alpha}c_m=\kappa c_{m+1},
\quad m\in\mathbb Z.
$$
Since $|\kappa|=1$, it follows that
$$
|c_{m+1}|=|c_m|,
\quad m\in\mathbb Z.
$$
Thus, if one Fourier coefficient $c_m$ were nonzero, then all Fourier
coefficients would have the same positive modulus, contradicting
$\{ c_m \}_{m\in\mathbb Z}\in\ell^2(\mathbb Z)$. Hence all $c_m$ vanish and the uniqueness theorem for Fourier coefficients implies $c=0$.
\end{proof}

In the next theorem we prove the Han-Wang conjecture. 

\begin{theorem}\label{thm:no-compactly-supported-onb}
If $\alpha$ is irrational then there exists no compactly supported $g \in L^2(\R)$ such that $\left\{M_nT_{m+\alpha n}g:m,n\in\mathbb Z\right\}$ is an orthonormal basis for $L^2(\R)$.
\end{theorem}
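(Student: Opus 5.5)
We argue by contradiction. Suppose $\alpha\notin\Q$ and $g\in\lt$ with $\supp(g)\subseteq K$, $K$ compact, is such that $\{M_nT_{m+\alpha n}g\}$ is an orthonormal basis. Only orthonormality will be used. The idea is to show that the fibre function $D$ of Lemma \ref{lem:direct-fibre-form} must be, in the variable $\omega$, a single pure frequency. The covariance relation \eqref{eq:direct-fibre-covariance} then forces the amplitude to satisfy the twisted relation of Lemma \ref{lem:irrational-torus-obstructions}(2), so it vanishes. This contradicts $|D|=1$.

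\emph{Step 1: each fibre is a pure frequency.} By Lemma \ref{lem:direct-fibre-form}, $|D|=1$ a.e. on $\R\times\T$. By Fubini, for a.e. $\theta$ the function $\omega\mapsto D(\omega,\theta)$ is unimodular a.e. By Lemma \ref{lem:fibre-compact-spectrum}, its distributional Fourier transform is supported in $-K$ for a.e. $\theta$. Lemma \ref{lem:bounded-unimodular-compact-spectrum} therefore gives, for a.e. $\theta$, a number $\lambda(\theta)\in -K$ and a number $c(\theta)$ with $|c(\theta)|=1$ such that
$$
D(\omega,\theta)=c(\theta)e^{2\pi i\lambda(\theta)\omega}\quad\text{for a.e. }\omega .
$$

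\emph{Step 2: insert into the covariance relation.} By Fubini, \eqref{eq:direct-fibre-covariance} holds for a.e. $\theta$ and then a.e. $\omega$. Discarding null sets, we may also assume that $\theta$ and $\theta-\alpha$ are both good $\theta$'s from Step 1. This gives
$$
c(\theta)e^{2\pi i\lambda(\theta)}e^{2\pi i\lambda(\theta)\omega}=e^{2\pi i(\alpha-\theta)}c(\theta-\alpha)e^{2\pi i\lambda(\theta-\alpha)\omega}\quad\text{for a.e. }\omega .
$$
Two unimodular exponentials in $\omega$ that agree on a set of positive measure have the same frequency and the same constant. Hence $\lambda(\theta)=\lambda(\theta-\alpha)$ and $c(\theta)e^{2\pi i\lambda(\theta)}=e^{2\pi i(\alpha-\theta)}c(\theta-\alpha)$ for a.e. $\theta$.

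\emph{Step 3: apply Lemma \ref{lem:irrational-torus-obstructions}.} The bounded function $\lambda\in L^2(\T)$ (bounded since $\lambda(\theta)\in -K$) is invariant under the shift by $\alpha$. By part (1), $\lambda\equiv\lambda_0$ a.e. The amplitude relation then reads, after replacing $\theta$ by $\theta+\alpha$,
$$
c(\theta+\alpha)=e^{-2\pi i\lambda_0}e^{-2\pi i\theta}c(\theta).
$$
This is \eqref{eq:c_relation} with $\kappa=e^{-2\pi i\lambda_0}$. Since $|c|=1$, we have $c\in L^2(\T)$, and part (2) gives $c=0$, contradicting $|c|=1$. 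Hence no such $g$ exists.

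The main obstacle is the technical point that $\theta\mapsto\lambda(\theta)$ and $\theta\mapsto c(\theta)$ are measurable, which Lemma \ref{lem:irrational-torus-obstructions} needs. I would obtain this from explicit integral formulas. First,
$$
e^{2\pi i\lambda(\theta)t}=\int_0^1 D(\omega+t,\theta)\overline{D(\omega,\theta)}\,d\omega ,
$$
which is measurable in $\theta$ for each rational $t$ by Fubini. Since $\lambda(\theta)$ ranges in the bounded set $-K$, it can be recovered measurably from these values. For instance, $\lambda(\theta)=\lim_{t\to0}\frac{1}{2\pi t}\arg e^{2\pi i\lambda(\theta)t}$ along rational $t$. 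Then
$$
c(\theta)=\int_0^1 D(\omega,\theta)e^{-2\pi i\lambda(\theta)\omega}\,d\omega
$$
is measurable as well. The remaining null-set bookkeeping in Step 2 is routine by Fubini.
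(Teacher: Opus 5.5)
Your proposal is correct and follows the paper's proof essentially step by step. Each fibre is a pure frequency by Lemmas \ref{lem:fibre-compact-spectrum} and \ref{lem:bounded-unimodular-compact-spectrum}, this is substituted into the covariance relation, and parts (1) and (2) of Lemma \ref{lem:irrational-torus-obstructions} then force $\lambda$ to be constant and $c=0$. The only cosmetic difference is the measurability step: you recover $\lambda(\theta)$ as a limit over rational $t$, while the paper uses one small $\tau$ for which $x\mapsto e^{2\pi i\tau x}$ is injective on $-K$, and your limit is eventually constant for small $t$, so the two are the same.
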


\begin{proof}
Suppose by contradiction that $\left\{M_nT_{m+\alpha n}g:m,n\in\mathbb Z\right\}$ is an orthonormal basis for some $g \in \lt$ with compact support. Let $K = \supp(g)$.

By Lemma~\ref{lem:fibre-compact-spectrum}, for almost every $\theta$, the
function $\omega\mapsto D(\omega,\theta)$ has distributional Fourier transform
supported in the compact set $-K$. Lemma~\ref{lem:bounded-unimodular-compact-spectrum}
therefore implies that, for almost every $\theta$, there exist
$$
c(\theta)\in\C,
\quad
\lambda(\theta)\in -K,
$$
with $|c(\theta)|=1$, such that
\begin{equation}\label{eq:D-exponential-representation}
D(\omega,\theta)
=
c(\theta)e^{2\pi i\lambda(\theta)\omega}
\end{equation}
for almost every $\omega\in\R$. This gives two functions $c(\theta)$ and $\lambda(\theta)$ which are defined for almost every $\theta \in \T$.

We show that $c(\theta)$ and $\lambda(\theta)$ are measurable, considered as functions in $L^\infty(\T)$. Since $-K$ is compact, choose $\tau>0$ so
small that the map
$$
x \mapsto e^{2\pi i\tau x}
$$
is injective on $-K$. From \eqref{eq:D-exponential-representation} we obtain the relation
$$
\int_0^1D(\omega+\tau,\theta)\overline{D(\omega,\theta)}\,d\omega
=
e^{2\pi i\tau\lambda(\theta)}.
$$
The left-hand side is measurable in $\theta$. Since the exponential map above
is injective on $-K$ and since $\lambda(\theta) \in -K$ for almost every $\theta \in \T$, it follows that $\lambda(\theta)$ is measurable. Then
$c(\theta)$ is measurable as well, since it can be represented in the form
$$
c(\theta)=\int_0^1D(\omega,\theta)e^{-2\pi i\lambda(\theta)\omega}\,d\omega.
$$
Now use the relation \eqref{eq:direct-fibre-covariance} which says that
$$
D(\omega+1,\theta)
=
e^{2\pi i(\alpha-\theta)}D(\omega,\theta-\alpha).
$$
Substituting \eqref{eq:D-exponential-representation}, and restricting to a
full-measure set of $\theta$'s on which both $\theta$ and $\theta-\alpha$ have
the representation \eqref{eq:D-exponential-representation}, gives
\begin{equation}\label{eq:c_lambda_relation}
c(\theta)e^{2\pi i\lambda(\theta)(\omega+1)}
=
e^{2\pi i(\alpha-\theta)}
c(\theta-\alpha)e^{2\pi i\lambda(\theta-\alpha)\omega}
\end{equation}
for almost every $\omega$. Define $A_\theta$ and $B_\theta$ via
$$
A_\theta = c(\theta)e^{2\pi i\lambda(\theta)}, \quad B_\theta = e^{2\pi i(\alpha-\theta)}c(\theta-\alpha).
$$
Since $|c(\theta)| = |c(\theta-\alpha)| = 1$, we have $|A_\theta| = |B_\theta| = 1$ and
$$
\frac{A_\theta}{B_\theta} e^{2\pi i (\lambda(\theta) -\lambda(\theta-\alpha) ) \omega} = 1
$$
for a.e. $\omega \in \R$. Since complex exponentials are linearly independent, it follows that
$$
\lambda(\theta) = \lambda(\theta-\alpha)
$$
and therefore $\lambda(\theta)=\lambda(\theta-\alpha)$ for a.e. $\theta \in \T$. Since $\lambda$ is a measurable and bounded function, it follows from Lemma \ref{lem:irrational-torus-obstructions}(i) that $\lambda$ is a constant function. Write $\lambda(\theta)=\lambda_0$ for almost every $\theta$. Then \eqref{eq:c_lambda_relation} gives
$$
c(\theta)e^{2\pi i\lambda_0}
=
e^{2\pi i(\alpha-\theta)}c(\theta-\alpha).
$$
Replacing $\theta$ by $\theta+\alpha$, we obtain
$$
c(\theta+\alpha)
=
 \kappa e^{-2\pi i\theta}c(\theta), \quad \kappa = e^{-2\pi i\lambda_0}.
$$
By Lemma~\ref{lem:irrational-torus-obstructions}(ii), the only $L^2$-solution
is $c=0$. This contradicts $|c(\theta)|=1$ almost everywhere.
\end{proof}

\section{Proof of Theorem \ref{thm:main1} and Theorem \ref{thm:main2}}

We begin with a structural invariance property that will be used repeatedly throughout this section (see, for instance, \cite{folland}).

\begin{lemma}\label{lma:chirp_invariance}
    Let the matrix $S \in \R^{2 \times 2}$ be defined by
    $$
    S = \begin{pmatrix}
        \mu & 0 \\ \nu & \mu^{-1}
    \end{pmatrix}, \quad \mu,\nu \in \R, \quad \mu \neq 0.
    $$
    Moreover, let $U_S : \lt \to \lt$ be the unitary operator
    $$
    (U_Sf)(t) = |\mu|^{-\frac12} e^{\pi i (\nu/\mu) t^2} f(t/\mu).
    $$
    Then $\mathbf{G}(g,\Lambda)$ is an orthonormal basis if and only if $\mathbf{G}(U_Sg,S\Lambda)$ is an orthonormal basis where $S\Lambda := \{ S\lambda : \lambda \in \Lambda \}$.
\end{lemma}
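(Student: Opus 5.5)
The plan is to show that $U_S$ intertwines time-frequency shifts. Concretely, for each $(t,s)\in\R^2$ we aim for
$$
U_S M_s T_t = c(t,s)\, M_{s'} T_{t'} U_S, \qquad (t',s')^\top = S(t,s)^\top,
$$
with $|c(t,s)|=1$. Once this holds, $U_S$ maps $\mathbf G(g,\Lambda)$ onto $\mathbf G(U_Sg,S\Lambda)$ up to unimodular scalars. Because $U_S$ is unitary, and because multiplying the elements of a system by unimodular constants changes neither orthonormality nor completeness, the equivalence follows. For the reverse direction we apply the same argument to $S^{-1}$, which has the same lower-triangular form. Alternatively, we use that $U_S$ is bijective, so that $U_S^{-1}$ maps the system back.

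To get the intertwining relation, factor $S$ as a product of a dilation and a shear:
$$
S=\begin{pmatrix}1&0\\ \nu/\mu & 1\end{pmatrix}\begin{pmatrix}\mu&0\\0&\mu^{-1}\end{pmatrix}.
$$
Correspondingly, write $U_S = C_{\nu/\mu} D_\mu$, where $D_\mu f(t)=|\mu|^{-1/2}f(t/\mu)$ and $C_\beta f(t)=e^{\pi i\beta t^2}f(t)$. Both operators are plainly unitary on $\lt$: $D_\mu$ by the substitution $t\mapsto t/\mu$, and $C_\beta$ as multiplication by a unimodular function.

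The two relations to check directly are the following. First, $D_\mu M_sT_t = M_{s/\mu}T_{\mu t}D_\mu$, which holds exactly with no phase. Second, $C_\beta M_sT_t = e^{-\pi i\beta t^2} M_{s+\beta t}T_t C_\beta$, which follows from expanding $e^{\pi i\beta x^2}=e^{\pi i\beta (x-t)^2}e^{2\pi i\beta t x}e^{-\pi i\beta t^2}$. Composing these gives the claim with
$$
t'=\mu t,\qquad s'=s/\mu+(\nu/\mu)\mu t=\nu t+\mu^{-1}s,
$$
which is exactly $S(t,s)^\top$. The only real care needed is the bookkeeping of these phases and the order of composition; no step poses a genuine obstacle.
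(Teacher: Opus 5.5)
Your proof is correct. Both intertwining identities check out, and composing them gives
$U_S M_sT_t = e^{-\pi i\mu\nu t^2}M_{\nu t+s/\mu}T_{\mu t}U_S$.
Since $\lambda\mapsto S\lambda$ is a bijection, $U_S$ therefore carries $\mathbf G(g,\Lambda)$ onto $\mathbf G(U_Sg,S\Lambda)$ up to unimodular factors, and unitarity of $U_S$ gives both directions.

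The paper gives no argument of its own. It only cites Folland, i.e.\ the symplectic covariance of the metaplectic representation: every $S\in SL(2,\R)$ admits a unitary $U$ with $U M_tT_s\cdots$ — more precisely, $U$ conjugates the time-frequency shift indexed by $z$ into the one indexed by $Sz$, up to a unimodular constant. Your route is the explicit special case of this for the lower-triangular matrices, factored into a dilation and a chirp multiplication.

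The two approaches buy different things:
\begin{itemize}
\item Your verification is self-contained, needs no representation theory, and exhibits the phase $e^{-\pi i\mu\nu t^2}$ explicitly.
\item The cited result covers all symplectic matrices at once. In particular it covers the rotations used later in Lemma~\ref{lma:frac_invariance}, which your dilation/chirp factorization cannot reach without bringing in the fractional Fourier transform or a Fourier-transform generator.
\end{itemize}
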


The previous lemma, in combination with the solution to the Han-Wang conjecture, gives the following statement.

\begin{theorem}\label{thm:no_onb}
    Let $\Lambda \subset \R^2$ be a lattice with density $D(\Lambda)=1$. If $\pi_1(\Lambda)$ is not discrete then there exists no compactly supported $g \in \lt$ such that $\mathbf{G}(g,\Lambda)$ is an orthonormal basis for $\lt$.
\end{theorem}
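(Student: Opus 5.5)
The plan is to reduce Theorem \ref{thm:no_onb} to Theorem \ref{thm:no-compactly-supported-onb} by means of the symplectic change of lattice in Lemma \ref{lma:chirp_invariance}. The key observation is that the operator $U_S$ has the form of a chirp multiplication composed with a dilation. It therefore maps compactly supported functions to compactly supported functions: if $\supp g\subseteq K$, then $\supp U_S g\subseteq \mu K$. It thus suffices to find a lower-triangular matrix $S$ of determinant one such that $S\Lambda=\Lambda_\alpha$ for some irrational $\alpha$.

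First, write $\Lambda=B\Z^2$ with columns $v_1=(t_1,s_1)$ and $v_2=(t_2,s_2)$. Density one gives $|\det B|=1$, and after swapping the columns if necessary we may assume $\det B=1$. Since $\pi_1(\Lambda)=t_1\Z+t_2\Z$ is not discrete, both $t_1$ and $t_2$ are nonzero and $\alpha:=t_2/t_1$ is irrational. Note that if $t_1=0$ or $t_2=0$, or if the ratio were rational, the projection would be a cyclic group and hence discrete.

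Next, choose $S$ with $\mu=1/t_1$. Then $Sv_1=(1,\nu t_1+s_1t_1)$. Choosing $\nu=-s_1t_1^{\ }$ adjusted appropriately, namely $\nu = -s_1/t_1\cdot t_1^{\,0}\cdots$, we solve $\nu t_1+\mu^{-1}s_1=0$, that is, $\nu=-s_1$. This gives $Sv_1=(1,0)$. The other column becomes $Sv_2=(\alpha, \nu t_2+t_1 s_2)$. Since $\det(SB)=\det B=1$ and the first column of $SB$ is $(1,0)$, the second entry of $Sv_2$ must equal $1$. Hence
$$SB=\begin{pmatrix}1&\alpha\\0&1\end{pmatrix},$$
and so $S\Lambda=\Lambda_\alpha$. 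The only care needed here is the sign of the determinant. The column swap handles it; alternatively, one can compose with the reflection $x\mapsto -x$ on the first coordinate (the case $\mu=-1$, $\nu=0$ of Lemma \ref{lma:chirp_invariance}), which preserves compact support.

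Finally, suppose $\mathbf G(g,\Lambda)$ were an orthonormal basis with $g$ compactly supported. Lemma \ref{lma:chirp_invariance} then shows that $\mathbf G(U_Sg,\Lambda_\alpha)$ is an orthonormal basis. The elements of this system are exactly $M_nT_{m+\alpha n}U_Sg$ with $m,n\in\Z$, and $U_Sg$ is compactly supported. This contradicts Theorem \ref{thm:no-compactly-supported-onb}.

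I do not expect a genuine obstacle. The only step requiring attention is the bookkeeping of the matrix normalization, in particular the orientation sign and checking that the lower-triangular form of $S$ suffices to reach the upper-triangular lattice $\Lambda_\alpha$. The computation above confirms that it does.
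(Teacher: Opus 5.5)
Your proposal is correct and is essentially the paper's proof: you build the same lower-triangular $S$ (with $\mu=t_1^{-1}$ and $\nu=-s_1$, i.e.\ $S=L^{-1}$ in the paper's notation) so that $S\Lambda=\Lambda_\alpha$ with $\alpha=t_2/t_1\notin\Q$, and you conclude via Lemma \ref{lma:chirp_invariance}, the preservation of compact support under $U_S$, and Theorem \ref{thm:no-compactly-supported-onb}. One small correction to your aside: the choice $\mu=-1$, $\nu=0$ gives $S=-I$, which cannot change orientation because every $S$ in Lemma \ref{lma:chirp_invariance} has $\det S=1$; your column swap already fixes the sign, so the main argument is unaffected.
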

\begin{proof}
    Since $D(\Lambda)=1$, there exists $A \in \R^{2\times 2}$ with $\det A=1$ such that $\Lambda = A\Z^2$. Suppose that $A$ is given by
    $$
    A = \begin{pmatrix}
        a & b \\ c & d
    \end{pmatrix}.
    $$
    The projection of $\Lambda$ onto the first coordinate is given by $\pi_1(\Lambda) = a \Z + b \Z$. This subgroup of $\R$ is not discrete if and only if $a \neq 0$ and $\alpha \coloneqq \frac{b}{a} \not\in \Q$. With this choice of $\alpha$ we have the factorization
    $$
    A = \begin{pmatrix}
        a & 0 \\ c & a^{-1}
    \end{pmatrix}
    \begin{pmatrix}
        1 & \alpha \\ 0 & 1
    \end{pmatrix}.
    $$
    Defining
    $$
    L = \begin{pmatrix}
        a & 0 \\ c & a^{-1}
    \end{pmatrix}
    $$
    it follows that $\Lambda = L \Lambda_\alpha$ where $\Lambda_\alpha$ is given as in equation \eqref{eq:han_wang_matrix}. Now suppose towards a contradiction that there exists a compactly supported function $g \in \lt$ such that $\mathbf{G}(g,\Lambda)$ is an orthonormal basis for $\lt$. Define $S := L^{-1}$. Then $S$ is given by
    $$
    S = \begin{pmatrix}
        a^{-1} & 0 \\ -c & a
    \end{pmatrix}.
    $$
    Applying Lemma \ref{lma:chirp_invariance} with $\mu = a^{-1}$ and $\nu = -c$ it follows that $\mathbf{G}(U_S g, SL\Lambda_\alpha) = \mathbf{G}(U_S g, \Lambda_\alpha)$ is an orthonormal basis. Since $U_S g$ is compactly supported if and only if $g$ is compactly supported, we obtain a contradiction to Theorem \ref{thm:no-compactly-supported-onb}.
\end{proof}

Having ruled out the non-discrete case, we now consider lattices for which $\pi_1(\Lambda)$ is discrete. The next lemma shows that such lattices can always be represented, after a suitable change of basis, by a lower triangular matrix.

\begin{lemma}\label{lma:discrete_impl_lower_triangular}
    Let $\Lambda \subset \R^2$ be a lattice with density $D(\Lambda) = 1$. If $\pi_1(\Lambda)$ is discrete then there exists a lower triangular matrix $L \in \R^{2 \times 2}$ of the form
    $$
    L=\begin{pmatrix} a&0\\ c&a^{-1}\end{pmatrix},
\quad a>0,
    $$
    so that $\Lambda$ can be represented by $\Lambda = L\Z^2$.
\end{lemma}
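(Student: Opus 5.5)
We write $\Lambda = A\Z^2$ with $\det A = 1$ (possible since $D(\Lambda)=1$; if $\det A=-1$ we swap the two columns of $A$) and
$$
A=\begin{pmatrix} p & q \\ r & s\end{pmatrix}.
$$
The plan is to change the basis of $\Z^2$ by a matrix in $SL_2(\Z)$ so that the new first row becomes $(a,0)$ with $a>0$. Once that is done, the determinant condition forces the lower right entry to equal $a^{-1}$, which gives the claimed form of $L$.

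First, $\pi_1(\Lambda) = p\Z + q\Z$ is a discrete subgroup of $\R$. It is nonzero: if $p=q=0$, then $\det A=0$, which is impossible. Hence $p\Z+q\Z = a\Z$ for a unique $a>0$. Consequently $p = a m$ and $q = a n$ with $m,n\in\Z$. Moreover $\gcd(m,n)=1$, since $a\Z = p\Z+q\Z = a(m\Z+n\Z)$ forces $m\Z + n\Z = \Z$.

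Next, by Bézout there are integers $u,v$ with $mu + nv = 1$. Set
$$
P = \begin{pmatrix} u & -n \\ v & m\end{pmatrix},
$$
which has $\det P = um + nv = 1$, so $P\in SL_2(\Z)$ and $P\Z^2 = \Z^2$. Then $\Lambda = AP\Z^2$, and the first row of $AP$ is
$$
(pu+qv,\; -pn+qm) = (a(mu+nv),\; a(-mn+nm)) = (a,0).
$$
So $AP$ is lower triangular with upper left entry $a$. Since $\det(AP)=\det A = 1$, its lower right entry equals $a^{-1}$. We take $L = AP$, with $c$ the lower left entry of $AP$.

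There is no real obstacle here. The only points needing care are the orientation fix $\det A = 1$ and the nonvanishing of $\pi_1(\Lambda)$, which ensures $a>0$ exists. Both are handled above.
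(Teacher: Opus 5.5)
Your proposal is correct and follows the paper's proof essentially verbatim. You write $\pi_1(\Lambda)=a\Z$, factor the first row as $a(m,n)$ with $\gcd(m,n)=1$, and use the same Bézout matrix $\begin{pmatrix} u & -n \\ v & m\end{pmatrix}\in SL_2(\Z)$ to make the first row $(a,0)$; the determinant then forces the lower-right entry to be $a^{-1}$. You also spell out two small points the paper leaves implicit: swapping columns to get $\det A=1$, and that $\pi_1(\Lambda)\neq\{0\}$, so that $a>0$.
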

\begin{proof}
    Define $H = \pi_1(\Lambda)$. Since $\Lambda$ is an additive subgroup of $\R^2$, it follows that $\pi_1(\Lambda)$ is a discrete additive subgroup of $\R$. Since every discrete additive subgroup of $\R$ that contains zero is of the form $\tau \Z$, it follows that $\pi_1(\Lambda) = \tau \Z$ for some $\tau > 0$.
    Now assume that $\Lambda = A \Z^2$ where
    $$
    A = \begin{pmatrix}
        \alpha & \beta \\ \gamma & \delta
    \end{pmatrix}.
    $$
    Since $D(\Lambda)=1$, we may choose the lattice basis matrix $A$ so that $\det A=1$.
    Since $\pi_1(\Lambda) = \alpha\Z + \beta\Z = \tau \Z$, there exist integers $m,n \in \Z$ such that
    $$
    \alpha = \tau m, \quad \beta = \tau n.
    $$
    We therefore have
    $
    \alpha\Z + \beta\Z = \tau ( m\Z + n \Z ) = \tau \,  \mathrm{gcd}(m,n) \Z = \tau \Z
    $
    which implies that $\mathrm{gcd}(m,n)=1$. Since every integer can be written as a linear combination of two coprime integers, it follows that there exist $r,s \in \Z$ such that
    $
    mr+ns = 1.
    $
    Now define the matrix $U \in \Z^{2 \times 2}$ by
    $$
    U = \begin{pmatrix}
        r & -n \\ s & m
    \end{pmatrix}.
    $$
    Then $\det U = 1$ and therefore $U$ leaves $\Z^2$ invariant, i.e., $U\Z^2 = \Z^2$. A direct calculation shows that
    $$
    AU =\begin{pmatrix}
        \tau & 0 \\ \gamma r + \delta s & -\gamma n + \delta m
    \end{pmatrix}.
    $$
    Since $\det(AU)=\det A\det U=1$ and the upper-left entry is $\tau>0$,
we must have
\[
-\gamma n+\delta m=\tau^{-1}.
\]
Thus $L =AU$ has the required form with $a=\tau$ and
$c=\gamma r+\delta s$.
\end{proof}

To complete the proof of Theorem \ref{thm:main2}, we appeal to the following characterization due to Liu \cite{L01}.

\begin{theorem}\label{thm:liu}
    Let $g \in \lt$ be compactly supported. Then the following are equivalent:
    \begin{enumerate}
        \item $\mathbf{G}(g,\Z^2)$ is an orthonormal basis
        \item $|g|=\1_E$ for a bounded measurable set $E \subset \R$ that tiles the real line with integer translates.
    \end{enumerate}
\end{theorem}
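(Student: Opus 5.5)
The plan is to use the Zak transform, which turns the question into a pointwise statement about trigonometric polynomials; compact support of $g$ is exactly what makes them polynomials. For $f \in \lt$ set
$$
Zf(x,\omega)=\sum_{k\in\Z} f(x+k)e^{-2\pi i k\omega},\qquad (x,\omega)\in[0,1)^2 .
$$
This is the time-side analogue of the function $D$ in Lemma \ref{lem:direct-fibre-form} with $\alpha=0$. We will use the standard facts that $Z$ is a unitary map $\lt\to L^2([0,1)^2)$ and that
$$
Z(M_nT_m g)(x,\omega)=e^{2\pi i n x}e^{-2\pi i m\omega}\,Zg(x,\omega).
$$
The exponentials $e^{2\pi i n x}e^{-2\pi i m\omega}$, $m,n\in\Z$, form an orthonormal basis of $L^2([0,1)^2)$. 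So $\mathbf G(g,\Z^2)$ is the image under $Z^{-1}$ of this basis multiplied by $Zg$.

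The first step is to show that $\mathbf G(g,\Z^2)$ is an orthonormal basis if and only if $|Zg|=1$ almost everywhere.
\begin{itemize}
\item Orthonormality is equivalent to all Fourier coefficients of $|Zg|^2\in L^1$ being $\delta_{m,0}\delta_{n,0}$. By uniqueness of Fourier coefficients this means $|Zg|^2=1$ a.e.; this is the same argument as in Lemma \ref{lem:direct-fibre-form}.
\item Conversely, if $|Zg|=1$ a.e., then multiplication by $Zg$ is unitary on $L^2([0,1)^2)$. It therefore maps the exponential basis to an orthonormal basis, which gives completeness.
\end{itemize}

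The second step, which is the heart of the argument, uses compact support of $g$. If $\supp g\subseteq[-R,R]$, then for a.e. fixed $x$ the function $\omega\mapsto Zg(x,\omega)$ is a trigonometric polynomial with finitely many coefficients $g(x+k)$, $|k|\le R+1$. Write it as the Laurent polynomial $P_x(z)=\sum_k g(x+k)z^{-k}$ with $z=e^{2\pi i\omega}$. The condition $|P_x|=1$ on the unit circle gives $P_x(z)\overline{P_x(1/\bar z)}=1$ on $|z|=1$, hence on all of $\C\setminus\{0\}$ by the identity theorem. So $P_x$ has no zeros in $\C\setminus\{0\}$. A Laurent polynomial with this property is a monomial $c z^{-k_0}$. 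Since $|P_x|=1$ on the circle, $|c|=1$.

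Consequently, for a.e. $x\in[0,1)$ there is exactly one $k_0\in\Z$ with $g(x+k_0)\neq0$, and $|g(x+k_0)|=1$. This says precisely that $|g|=\1_E$ for a set $E$ meeting almost every coset $x+\Z$ in exactly one point, i.e. $E$ tiles $\R$ with $\Z$-translates. $E$ is bounded because $E\subseteq\supp g$.

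For the converse direction, let $|g|=\1_E$ with $E$ bounded and tiling by $\Z$. Then for a.e. $x$ exactly one term of the sum defining $Zg(x,\cdot)$ is nonzero, and it has modulus one. Hence $|Zg|=1$ a.e., and the first step gives the basis property.

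I expect the main obstacle to be the fibrewise measure-theoretic bookkeeping: making sure that "a.e. $(x,\omega)$" passes to "a.e. $x$, then every $\omega$". This works because each fibre is a genuine polynomial, so it is continuous in $\omega$. Once that is in place, the monomial rigidity lemma for unimodular Laurent polynomials carries the argument. It plays the role that Lemma \ref{lem:zero-free-exp-type} played in the irrational case.
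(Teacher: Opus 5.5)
Your proof is correct. It is also a genuinely different route, because the paper does not prove this statement at all. It imports the result from Liu \cite{L01} and uses it as a black box, after reducing general lattices with discrete $\pi_1(\Lambda)$ to $\Z^2$ via Lemmas \ref{lma:discrete_impl_lower_triangular} and \ref{lma:chirp_invariance}.

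Your argument is self-contained and elementary, and it is essentially a discrete counterpart of the fibre argument in Section~2. There, the fibre function $D(\cdot,\theta)$ is built on the frequency side, with the chirp weights $\gamma_k$ absorbing the irrational shear $m+\alpha n$. As a result, compact support of $g$ only yields compact \emph{spectrum} of $\omega\mapsto D(\omega,\theta)$ (Lemma \ref{lem:fibre-compact-spectrum}). Concluding that the fibres are pure exponentials then requires Paley--Wiener--Schwartz and Hadamard factorization (Lemmas \ref{lem:zero-free-exp-type} and \ref{lem:bounded-unimodular-compact-spectrum}), plus a separate measurability argument for $\lambda(\theta)$.

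For the unsheared lattice $\Z^2$ you can use the time-side Zak transform, where compact support makes each fibre an honest trigonometric polynomial. The reflection trick from the proof of Lemma \ref{lem:bounded-unimodular-compact-spectrum}, in your form $P_x(z)\overline{P_x(1/\bar z)}\equiv 1$, then reduces the rigidity step to the trivial fact that a zero-free Laurent polynomial is a monomial. The passage from ``a.e.\ $(x,\omega)$'' to ``a.e.\ $x$, every $\omega$'' is handled correctly by continuity in $\omega$, as you say.

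Your route gives a byproduct: orthonormality of $\mathbf G(g,\Z^2)$ alone already forces $|Zg|=1$ a.e., and hence completeness. What the paper's heavier machinery buys is robustness under the shear. For $\Lambda_\alpha$ with irrational $\alpha$, the translations $T_{\alpha n}$ move the coset $x+\Z$ to $x+\alpha n+\Z$. So the time-side fibres no longer decouple the system into multiplications by exponentials, and your approach would not apply directly there.
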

\begin{proof}[Proof of Theorem \ref{thm:main2}]
    Let $\mathbf{G}(g,\Lambda)$ be an orthonormal basis. By Lemma \ref{lma:discrete_impl_lower_triangular}, there exists a matrix
    $$
    L = \begin{pmatrix}
        a & 0 \\ c & a^{-1}
    \end{pmatrix}
    $$
    such that $\Lambda = L \Z^2$. Let $S = L^{-1}$, i.e.,
    $$
    S = \begin{pmatrix}
        a^{-1} & 0 \\ -c & a
    \end{pmatrix}.
    $$
    By Lemma \ref{lma:chirp_invariance} with $\mu = a^{-1}$ and $\nu = -c$ we have that $\mathbf{G}(g,\Lambda)$ is an orthonormal basis if and only if $\mathbf{G}(U_Sg,\Z^2)$ is an orthonormal basis. By Theorem \ref{thm:liu}, the latter holds if and only if $|U_S g|=\1_E$ for a bounded measurable set $E \subset \R$ that tiles the real line with integer translates. We have that
    $$
    |U_S g(t)| = |\mu|^{-\frac12} |g(t/\mu)| = |a|^{\frac12} |g(at)|
    $$
    Hence, the condition that $|U_S g|=\1_E$ for a bounded measurable set $E \subset \R$ that tiles the real line with integer translates is equivalent to the condition that
    $
    |g| = \frac{1}{\sqrt{a}} \1_\Omega
    $
    where $\Omega \subset \R$ is a bounded measurable set that tiles the real line by $a\Z$-translates.
\end{proof}

Theorem \ref{thm:main1} follows at once from the results in this section.

\begin{proof}[Proof of Theorem \ref{thm:main1}]
    Suppose that $\Lambda$ is a lattice and $g$ is a compactly supported function such that $\mathbf{G}(g,\Lambda)$ is an orthonormal basis. It is well-known that in this case the density of $\Lambda$ satisfies $D(\Lambda)=1$ (see for example \cite{IM18}). Moreover, $\pi_1(\Lambda)$ must be discrete by Theorem \ref{thm:no_onb}. Since $\pi_1(\Lambda)$ is a discrete subgroup containing zero, we have $\Lambda = a \Z$ for some $a>0$.

    On the other hand, if $D(\Lambda)=1$ and  $\pi_1(\Lambda)=a\Z$ for some $a>0$, then Theorem \ref{thm:main2} implies that the choice $g = \frac{1}{\sqrt{a}} \1_{[0,a]}$ gives an orthonormal basis $\mathbf{G}(g,\Lambda)$.
\end{proof}

\section{Proof of Theorem \ref{thm:main3}}

\subsection{Tilings of the unit cube}
We begin with a result of Gabardo, Lai, and Wang characterizing Gabor orthonormal bases generated by characteristic functions of unit intervals \cite[Theorem 1.2]{GLW15}. This theorem relates the orthonormal basis property to a geometric tiling condition in the time-frequency plane. For sets $S,T \subseteq \R^2$, we say that $S+T$ is a tiling if 
$$
\bigcup_{t \in T} (S+t) = \R^2
$$
and $(S+t) \cap (S+t')$ has Lebesgue measure zero for all $t,t' \in T$ with $t \neq t'$.

\begin{theorem}\label{thm:1}
    Let $\Lambda \subseteq \R^2$. Then $\mathbf{G}(\1_{[0,1]},\Lambda)$ is an orthonormal basis for $\lt$ if and only if $\mathbf{G}(\1_{[0,1]},\Lambda)$ is an orthogonal set and $[0,1]^2+\Lambda$ is a tiling of $\R^2$.
\end{theorem}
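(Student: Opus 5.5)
We need to prove Theorem~\ref{thm:1}: $\mathbf{G}(\1_{[0,1]},\Lambda)$ is an orthonormal basis if and only if it is an orthogonal set and $[0,1]^2+\Lambda$ tiles $\R^2$. Here $\Lambda$ is an arbitrary countable set. The plan is to compare the frame identity of the system with the measure of the square. The tool is the short-time Fourier transform with window $\1_{[0,1]}$.

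\textbf{Setup.} For $f \in \lt$ put
$$V f(t,s) = \langle f, M_s T_t \1_{[0,1]}\rangle .$$
Then $V$ is an isometry from $\lt$ into $L^2(\R^2)$ with $\|Vf\|_{L^2(\R^2)} = \|\1_{[0,1]}\|\,\|f\| = \|f\|$ (Moyal's identity). In particular, since the elements $M_\lambda\1_{[0,1]}$ have norm one, orthogonality is the same as orthonormality. The completeness question therefore reduces to the Parseval identity
$$\sum_{\lambda\in\Lambda}|\langle f,M_{\lambda_2}T_{\lambda_1}\1_{[0,1]}\rangle|^2=\|f\|^2 .$$

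\textbf{Key step.} The argument rests on the function
$$F(x,\xi) = \sum_{\lambda\in\Lambda}\bigl|\langle M_\xi T_x \1_{[0,1]}, M_{\lambda_2}T_{\lambda_1}\1_{[0,1]}\rangle\bigr|^2 .$$
Since the system is orthonormal, Bessel's inequality gives $F \le 1$ everywhere. Equality $F(x,\xi) = 1$ holds exactly when $M_\xi T_x\1_{[0,1]}$ lies in the closed span of the system. Completeness is equivalent to $F \equiv 1$. One direction is trivial. For the other, observe that time-frequency shifts of $\1_{[0,1]}$ span a dense subspace, since $V$ is injective. The condition $F = 1$ holds on a closed set (the equality case of Bessel's inequality) and indicates membership in the closed span. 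So it suffices to have $F = 1$ almost everywhere, because continuity of $(x,\xi)\mapsto M_\xi T_x\1_{[0,1]}$ upgrades this to everywhere.

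\textbf{Integrating $F$ over a large box.} Compute
$$|\langle M_\xi T_x \1_{[0,1]}, M_{s}T_{t}\1_{[0,1]}\rangle| = \bigl|\widehat{\1_{I}}(\xi-s)\bigr|, \qquad I = [0,1]\cap[x-t,\,x-t+1].$$
Integrating in $\xi$ gives $|I| = (1-|x-t|)_+$. Integrating then in $x$ over a unit interval gives $1$ exactly; this is a manifestation of Moyal's identity.

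The precise link to tiling is the most delicate part of the argument. I would follow Gabardo--Lai--Wang: show that for any orthonormal system the sets $[0,1]^2+\lambda$ behave like a packing, in that orthogonality forces $\lambda-\lambda'$ to avoid $(-1,1)\times\{\text{non-integers}\}$ in a suitable sense. Then compare $F$ with the counting function $\sum_\lambda \1_{[0,1]^2+\lambda}$.

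\textbf{Expected obstacle.} The main difficulty is proving that $F \equiv 1$ exactly when the square tiles with $\Lambda$. Density arguments only give averaged equalities. The first approach to try is to use the explicit structure of orthogonality for $\1_{[0,1]}$. The orthogonality condition $\langle M_s T_t\1, M_{s'}T_{t'}\1\rangle = 0$ means that either $|t-t'|\ge 1$, or $(s-s')(1-|t-t'|)\in\Z\setminus\{0\}$. From this one reduces, on each vertical strip, to the one-dimensional fact that exponentials $\{e^{2\pi i s x}\}_{s\in B}$ form an orthonormal basis of $L^2(J)$ iff $B$ is a translate of $|J|^{-1}\Z$. Combining this with tiling of $[0,1]+$ the first coordinates then yields both implications.
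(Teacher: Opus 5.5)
The paper does not prove this statement; it quotes it from \cite[Theorem~1.2]{GLW15}. Your proposal hands the same essential step to that paper, so as a proof it has a genuine gap. Your reformulation ``complete $\iff F\equiv 1$'' is correct, but neither implication between completeness and the tiling property is established. The final paragraph only asserts the structural conclusion (that $[0,1]+\pi_1(\Lambda)$ tiles, plus a strip-by-strip reduction), and that conclusion is the whole content.

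Two things in what you wrote are also off. First, the packing claim is misstated. Orthogonality does not force $\lambda-\lambda'$ to avoid $(-1,1)\times(\R\setminus\Z)$: for instance $\lambda-\lambda'=(\tfrac13,\tfrac32)$ is allowed, since $\tfrac32\cdot\tfrac23=1$. What your own formula in the last paragraph gives is $|s-s'|\ge 1/(1-|t-t'|)\ge 1$ whenever $|t-t'|<1$. That is, $\lambda-\lambda'\notin(-1,1)^2$, which says exactly that $[0,1]^2+\Lambda$ is a packing. Second, integrating $F$ over large boxes cannot decide anything. Take $\Lambda=\Z^2\setminus\{0\}$ and $\phi(x,\xi)=|\langle M_\xi T_x\1_{[0,1]},\1_{[0,1]}\rangle|^2$. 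The system is orthonormal, $F=1-\phi$, and the box averages of $F$ tend to $1$. Yet the system is incomplete and $[0,1]^2+\Lambda$ is not a tiling. (A minor point: $\int(1-|x-t|)_+\,dx=1$ over $\R$, not over a unit interval.)

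Both directions can be closed with elementary tools.

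\textbf{Only if.} Completeness gives $\sum_\lambda\phi(z-\lambda)=1$ for every $z$. Integrate this against $\1_Q(z-\cdot)$ with $Q=[0,1]^2$ and apply Tonelli. This gives $\int\phi(u)\,C(z-u)\,du=1=\int\phi$ for every $z$, where $C=\sum_\lambda\1_{Q+\lambda}\le 1$ a.e. by packing. Since $\phi>0$ off a null set in $\{|x|<1\}$, this forces $C=1$ a.e. Local finiteness of $\Lambda$ makes the union of the closed squares closed, hence equal to $\R^2$. This identity is exact, obtained by convolving rather than averaging, and it is the idea your ``expected obstacle'' paragraph is missing.

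\textbf{If.} No averaging argument can work here: tiling alone does not even give orthogonality (take shifted rows), while $F\equiv1$ forces it. You need structure instead.
\begin{enumerate}
\item For a.e.\ $x$, Fubini shows that the sections $[s,s+1]$ of the squares meeting $\{x\}\times\R$ tile $\R$. Hence their heights form $c_x+\Z$.
\item Two such squares with $s'-s=1$ have $|t-t'|<1$. Orthogonality then requires $(1-|t-t'|)\in\Z\setminus\{0\}$, which forces $t=t'$.
\item So $(x-1,x)$ contains exactly one point of $\pi_1(\Lambda)$ for a.e.\ $x$. This gives $\pi_1(\Lambda)=t_0+\Z$ and $\Lambda=\bigcup_k\{t_0+k\}\times(c_k+\Z)$.
\item The corresponding Gabor system is the union of the exponential orthonormal bases of $L^2[t_0+k,t_0+k+1]$, hence an orthonormal basis.
\end{enumerate}
The one-dimensional input here is that tilings of $\R$ by translates of $[0,1]$ are translates of $\Z$, combined with orthogonality between vertically adjacent squares. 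It is not the characterization of spectra of intervals that you cite.
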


The second theorem provides a structural description of all tilings of $\R^2$ by translates of the unit cube, see \cite[Proposition 3.2]{GLW15}.

\begin{theorem}\label{thm:2}
    Let $\Lambda \subseteq \R^2$. If $[0,1]^2+\Lambda$ is a tiling of $\R^2$, then there exist $z \in \R^2$ and a sequence $\{ s_k \}_{k \in \Z} \subseteq [0,1)$ with $s_0=0$ such that $\Lambda \in \{ \Lambda_1, \Lambda_2 \}$ where
    $$
    \Lambda_1 = z + \bigcup_{k \in \Z} (\Z + s_k) \times \{ k \}, \quad \Lambda_2 = z + \bigcup_{k \in \Z} \{ k \} \times (\Z + s_k).
    $$
\end{theorem}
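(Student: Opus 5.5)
The statement is Theorem \ref{thm:2}, which the paper cites from \cite[Proposition 3.2]{GLW15}. Here is how I would prove it directly; it is the classical structure theorem for cube tilings in dimension two (Keller-type rigidity).

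\textbf{Step 1: a column or row of cubes.} Fix a tile $Q_0=[0,1]^2+\lambda_0$. Look at the right edge of $Q_0$, the segment $\{\lambda_0^{(1)}+1\}\times[\lambda_0^{(2)},\lambda_0^{(2)}+1]$. Points just to its right are covered by tiles. Since the tiles have disjoint interiors and cover $\R^2$, a local argument near the edge shows that every tile meeting the edge from the right has its left side on the line $x=\lambda_0^{(1)}+1$. Call this \emph{face-to-face contact along lines}. The same holds at the top edge.

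\textbf{Step 2: the dichotomy.}
\begin{itemize}
\item If every tile has a neighbour sharing its full right edge exactly, then tiles line up in horizontal rows $\Z\times\{\cdot\}$ translates. Compare vertically adjacent rows: they must stack, so the heights form $z_2+\Z$. Each row is then $(\Z+s_k)\times\{k\}$ after translating by $z$. This gives $\Lambda_1$.
\item Otherwise some tile $Q$ has a right neighbour $Q'$ shifted vertically by $t\notin\Z$. Then two tiles touch the same vertical line with a non-integer offset. I claim this forces the whole vertical column through $Q$ to be a column $\{x\}\times(\Z+s)$.
\end{itemize}

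\textbf{Step 3: propagating the column (the main obstacle).} Take the column of tiles stacked above and below $Q$. Suppose some tile in it failed to align with $x$. The shifted tile would have to cover the region next to the vertical line. Combined with the non-integer offset $t$, this produces an uncovered gap or an overlap along the horizontal lines through the corners of $Q'$.

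I would first carry this out by a careful corner analysis. For the corner point $p$ of $Q$, consider the four quadrants at $p$. Tiles with disjoint interiors covering a neighbourhood of $p$ must have corners or edges passing through $p$. The offset prevents the horizontal edge at $p$ from continuing across the vertical line, so the vertical line must continue through $p$. Inducting upward and downward shows the entire line $x=\lambda^{(1)}$ is a union of tile edges.

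\textbf{Step 4: finishing the non-aligned case.} The vertical line $x=\lambda^{(1)}$ is now a union of tile edges. Let $\ell$ be the strip between this line and $x=\lambda^{(1)}+1$. It is covered by tiles whose left edges lie on the line. These tiles stack into a column $\{\lambda^{(1)}\}\times(\Z+s)$.

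Next I show the neighbouring strip also consists of tiles with left edge on $x=\lambda^{(1)}+1$. Suppose a tile straddled that line. Then, at the boundary between two tiles of the column, one of the corner configurations from Step 3 would appear. The exception is when all columns share the same offset, but that case is covered by relabelling.

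Iterating left and right gives columns at $x\in z_1+\Z$, which yields $\Lambda_2$. Translating so that $s_0=0$ completes the argument.
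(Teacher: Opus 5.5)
The paper gives no argument for this statement; it imports it from \cite[Proposition 3.2]{GLW15}. You instead prove the classical ``rows or columns'' structure of unit-square tilings directly, and the outline is sound. Step 1 uses local finiteness, which follows from $\|\lambda-\lambda'\|_\infty\ge 1$ for distinct $\lambda,\lambda'\in\Lambda$. The dichotomy is right. The corner analysis of Step 3 is the only genuinely nontrivial point, and it works. Concretely, take $Q=[0,1]^2$ with right neighbours $[1,2]\times[t-1,t]$ and $[1,2]\times[t,t+1]$, $0<t<1$. The tile covering the upper-left quadrant at $(1,1)$ has its bottom on $y=1$ and cannot cross $x=1$ without overlapping $[1,2]\times[t,t+1]$, so it equals $Q+(0,1)$. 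The right edge of $Q+(0,1)$ again has a junction at the interior height $t+1$, so the induction runs upward, and symmetrically downward. You should state explicitly that the tiles to the right of the growing column keep offset $t$ modulo $1$; this is Step 1 plus stacking. The citation buys brevity. Your route buys a self-contained section and makes clear that one corner configuration drives the whole dichotomy.

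Two things to clean up. First, your justification in Step 4 is off. No corner configurations and no condition on offsets are needed, and there is no exceptional case in which all columns share the same offset. Once the strip $[\lambda^{(1)},\lambda^{(1)}+1]\times\R$ is covered by a full column, no tile can straddle $x=\lambda^{(1)}+1$, since it would meet the interior of that strip. By Step 1 the tiles to its right then form a column with an arbitrary offset. Iterating in both directions gives $\Lambda_2$, whatever the offsets are.

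Second, in the first case of Step 2 you also need every left neighbour to be exactly aligned in order to get bi-infinite rows. This follows from the right-alignment hypothesis. If $P$ covers points just left of $Q$, the exactly aligned right neighbour of $P$ has $x$-interval $[\lambda^{(1)},\lambda^{(1)}+1]$ and a $y$-interval overlapping that of $Q$ in positive length, so it must be $Q$ itself.
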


\subsection{Rotations of the time-frequency plane}

We next discuss the behavior of Gabor systems under rotations of the time-frequency plane, where rotations are represented by the matrices
\begin{equation}\label{def:rotation_matrix}
    R_\theta = \begin{pmatrix}
\cos \theta & -\sin \theta\\
\sin\theta & \cos\theta
\end{pmatrix}, \quad \theta \in \R.
\end{equation}
The corresponding action of a rotation on functions is implemented by the fractional Fourier transform. We briefly recall its definition and basic properties.

Let $\{ h_n \}_{n \in \N_0} \subset \lt$ be the normalized Hermite basis. The fractional Fourier transform of angle \(\theta\) is the unitary operator defined by
\begin{equation}\label{eq:frac_ft}
\ft_\theta : \lt \to \lt, \quad    \mathcal F_\theta f = \sum_{n=0}^{\infty} e^{-in\theta}\langle f,h_n\rangle h_n,
\end{equation}
where the series converges in \(L^2(\mathbb R)\). The operator $\ft_\theta$ generalizes the ordinary Fourier transform: if $\theta = \frac \pi 2$, then \eqref{eq:frac_ft} is the spectral decomposition of the Fourier operator and therefore $\ft_{\frac \pi 2} = \ft$.
Moreover, we have
\[
\mathcal F_0=I,
\quad
\mathcal F_\pi = \mathcal{R},
\quad
\mathcal F_{\theta+\phi}
=
\mathcal F_\theta\mathcal F_\phi,
\]
where $\theta, \phi \in \R$ and $\mathcal{R}f(x) = f(-x)$ denotes the reflection operator.
The following lemma expresses the invariance of the orthonormal basis property under simultaneous rotation of the lattice and application of the fractional Fourier transform. For a proof of this statement we refer to \cite{folland} (see also \cite[Chapter 9]{Groechenig}).

\begin{lemma}\label{lma:frac_invariance}
Let $g \in \lt$, let $\Lambda \subset \R^2$ and let $\theta \in \R$. Then $\mathbf G(g,\Lambda)$ is an orthonormal basis if and only if $\mathbf G(\mathcal F_\theta g,R_{-\theta}\Lambda)$ is an orthonormal basis.
\end{lemma}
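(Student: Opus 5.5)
The plan is to reduce the statement to a covariance identity for time-frequency shifts under $\mathcal F_\theta$, and then use the elementary fact that unitary operators and unimodular scalar factors preserve the orthonormal basis property. Write $\pi(t,s) := M_sT_t$, so that $\mathbf G(g,\Lambda)=\{\pi(\lambda)g:\lambda\in\Lambda\}$. The key claim is that for every $z\in\R^2$ there is a constant $c_\theta(z)\in\C$ with $|c_\theta(z)|=1$ such that
\begin{equation*}
\mathcal F_\theta\, \pi(z)\, \mathcal F_\theta^{-1} = c_\theta(z)\, \pi(R_{-\theta}z).
\end{equation*}
Granting this, $\mathcal F_\theta \pi(\lambda) g = c_\theta(\lambda)\pi(R_{-\theta}\lambda)\mathcal F_\theta g$ for all $\lambda \in \Lambda$. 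Hence $\mathcal F_\theta$ maps $\mathbf G(g,\Lambda)$ onto $\mathbf G(\mathcal F_\theta g,R_{-\theta}\Lambda)$ up to unimodular factors on each element. Since $\mathcal F_\theta$ is unitary, and multiplying the vectors of a system by unimodular constants does not affect orthonormality or completeness, both directions follow. The reverse direction can also be obtained by applying the forward one with $-\theta$, using $\mathcal F_{-\theta}=\mathcal F_\theta^{-1}$.

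To prove the covariance identity, I will first pass to the symmetric time-frequency shifts $\rho(t,s)=e^{\pi i ts}M_sT_t$, which differ from $\pi(t,s)$ by a unimodular phase. It therefore suffices to show $\mathcal F_\theta\rho(z)\mathcal F_\theta^{-1}=\rho(R_{-\theta}z)$. The natural route goes through the ladder operators $A=\frac{1}{\sqrt 2}(\sqrt{2\pi}\,x+\frac{1}{\sqrt{2\pi}}\frac{d}{dx})$ and $A^*$, which act on the Hermite basis by $Ah_n=\sqrt n\, h_{n-1}$ and $A^*h_n=\sqrt{n+1}\,h_{n+1}$. Directly from definition \eqref{eq:frac_ft} one gets
\begin{equation*}
\mathcal F_\theta A\mathcal F_\theta^{-1}=e^{i\theta}A, \qquad \mathcal F_\theta A^*\mathcal F_\theta^{-1}=e^{-i\theta}A^*,
\end{equation*}
on finite Hermite combinations, and these are a core for all operators involved. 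Writing the position operator $X$ and the frequency operator $D=\frac{1}{2\pi i}\frac{d}{dx}$ as combinations of $A$ and $A^*$, these identities say that conjugation by $\mathcal F_\theta$ rotates the pair $(X,D)$ by an angle $\pm\theta$. Consequently, conjugation carries the self-adjoint generator $sX-tD$ of $\rho(t,s)=e^{2\pi i(sX-tD)}$ to the generator $s'X-t'D$ with $(t',s')=R_{-\theta}(t,s)$. Exponentiating, which is legitimate because unitary conjugation commutes with the functional calculus, gives the claim.

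As a sanity check on the sign convention, at $\theta=\pi/2$ we have $\mathcal F_{\pi/2}=\ft$ and $\ft M_sT_t=e^{2\pi i ts}M_{-t}T_s\ft$. This corresponds to $(t,s)\mapsto(s,-t)=R_{-\pi/2}(t,s)$, consistent with the claim. I expect the main obstacle to be the rigorous handling of unbounded operators: one has to make sure the conjugation identity for $sX-tD$ holds as an identity of self-adjoint operators, and not only on a dense subspace. If this becomes delicate, I would fall back on one of two alternatives. The first is to check $\mathcal F_\theta\rho(z)\mathcal F_\theta^{-1}=\rho(R_{-\theta}z)$ on Gaussians $e^{-\pi x^2+bx}$, $b\in\C$, which span a dense set and on which $\mathcal F_\theta$ has a closed form via the Mehler generating function. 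The second is to simply cite the metaplectic representation as in \cite{folland}.
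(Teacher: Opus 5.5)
Your proposal is correct and is essentially the argument the paper relies on; the paper gives no proof and defers to the metaplectic covariance $\mathcal F_\theta\,\pi(z)\,\mathcal F_\theta^{-1}=c_\theta(z)\,\pi(R_{-\theta}z)$ with $|c_\theta(z)|=1$ from Folland, which you verify by hand through the ladder operators, and your rotation $X\mapsto\cos\theta\,X-\sin\theta\,D$, $D\mapsto\sin\theta\,X+\cos\theta\,D$ is right (the domain issue you flag is harmless, since the Hermite functions are analytic vectors for $sX-tD$, so their span is a core by Nelson's theorem). One small slip: in fact $e^{2\pi i(sX-tD)}=e^{-\pi i ts}M_sT_t$, not $e^{\pi i ts}M_sT_t$, so the exact identity $\mathcal F_\theta\rho(z)\mathcal F_\theta^{-1}=\rho(R_{-\theta}z)$ holds for $\rho(t,s)=e^{-\pi i ts}M_sT_t$, and since you only use $|c_\theta(z)|=1$ the conclusion is unaffected.
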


\subsection{The Iosevich-Mayeli problem}

For a set $S \subseteq \R^2$, we define its upper Beurling density via
$$
D^+(S)
=
\limsup_{r\to\infty}
\sup_{x\in\mathbb R^2}
\frac{\#\bigl(S \cap \mathbb{B}(x,r)\bigr)}{|\mathbb{B}(x,r)|},
$$
where $\mathbb{B}(x,r) = \{ y \in \R^2 : |x-y| \leq r \}$ denotes the Euclidean ball of radius $r \geq 0$ around $x \in \R^2$.

\begin{theorem}\label{thm:io_answer}
    Let $\theta \in \R$ such that $\theta \not \in \frac \pi 2 \Z$. Further, define $g:=\ft_{-\theta} \1_{[0,1]}$ and $\Lambda := R_{\theta}\Z^2$. Then the following holds:
    \begin{enumerate}
        \item $\mathbf{G}(g,\Lambda)$ is an orthonormal basis for $\lt$,
        \item $\mathbf{G}(g,A \times B)$ is not an orthonormal basis for any $A,B \subseteq \R$.
    \end{enumerate}
\end{theorem}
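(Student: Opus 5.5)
Part (i) follows from the invariance Lemma~\ref{lma:frac_invariance}. The system $\mathbf G(\1_{[0,1]},\Z^2)$ is an orthonormal basis. Applying Lemma~\ref{lma:frac_invariance} with angle $-\theta$ shows that $\mathbf G(\ft_{-\theta}\1_{[0,1]}, R_{\theta}\Z^2)=\mathbf G(g,\Lambda)$ is an orthonormal basis.

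For (ii) we argue by contradiction. Suppose $\mathbf G(g,A\times B)$ is an orthonormal basis for some $A,B\subseteq\R$, and apply Lemma~\ref{lma:frac_invariance} with angle $\theta$. Since $\ft_\theta\ft_{-\theta}=I$, the system $\mathbf G(\1_{[0,1]},R_{-\theta}(A\times B))$ is an orthonormal basis. Theorem~\ref{thm:1} then shows that $[0,1]^2+R_{-\theta}(A\times B)$ is a tiling of $\R^2$. By Theorem~\ref{thm:2}, $\Gamma:=R_{-\theta}(A\times B)$ equals $\Lambda_1$ or $\Lambda_2$ for some $z$ and some sequence $\{s_k\}$. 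In either case $\Gamma$ contains full rows or columns of unit-spaced points: for $\Lambda_1$ the set $z+(\Z+s_k)\times\{k\}$ lies in $\Gamma$ for every $k$, and $\Lambda_2$ is symmetric. In particular, $\Gamma$ contains the arithmetic progression $z+\Z e_1$ in the case $\Lambda_1$, or $z+\Z e_2$ in the case $\Lambda_2$. It also contains points from every row (resp.\ column) $k\in\Z$.

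Rotating back, $A\times B=R_\theta\Gamma$ contains the progression $p+\Z v$ with $v=R_\theta e_1=(\cos\theta,\sin\theta)$, or $v=R_\theta e_2=(-\sin\theta,\cos\theta)$. Since $\theta\notin\frac\pi2\Z$, both coordinates of $v$ are nonzero. Hence $A\supseteq p_1+\Z v_1$ and $B\supseteq p_2+\Z v_2$, so that $A\times B\supseteq (p_1+\Z v_1)\times(p_2+\Z v_2)$. This is a two-dimensional rectangular lattice of positive density, namely $1/|v_1v_2|$ per unit area.

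We now exploit the product structure together with the full row/column structure to produce too many points. The points of $A\times B$, rotated by $R_{-\theta}$, must lie in $\Gamma$. The rectangular lattice $(p_1+\Z v_1)\times(p_2+\Z v_2)$, rotated by $-\theta$, has to be contained in $\Lambda_1$, so its points must all have integer second coordinate, up to the shift $z$. Writing its points as $R_{-\theta}(p_1+mv_1,p_2+nv_2)$, the second coordinate is an affine function of $(m,n)$, namely $-\sin\theta\,(p_1+mv_1)+\cos\theta\,(p_2+nv_2)$. It must lie in $z_2+\Z$ for all $m,n$. This forces $\sin\theta\cos\theta\in\Z$ and $\cos\theta\sin\theta\in\Z$ with $v=(\cos\theta,\sin\theta)$, i.e.\ $\tfrac12\sin 2\theta\in\Z$. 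That holds only if $\sin2\theta=0$, i.e.\ $\theta\in\frac\pi2\Z$, a contradiction. The case $\Lambda_2$ and the choice $v=R_\theta e_2$ are handled the same way, reading off the first coordinate instead.

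As a safeguard, one can alternatively use a density argument. An orthonormal Gabor basis has density $1$, so $D^+(A\times B)=1$; this is presumably why the paper introduces $D^+$. The product then contains $(p_1+\Z v_1+\Z)$-type sums, obtained by combining points of different rows, which make $A$ or $B$ contain $\Z v_1+\Z\cos\theta$-like sets. These are dense or too thick when irrational, giving infinite upper density. The main obstacle is carrying out this bookkeeping cleanly for both tiling types $\Lambda_1,\Lambda_2$. I would first try the integrality argument above, and fall back on the density bound if $p$ and $z$ offsets complicate it.
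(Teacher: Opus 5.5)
Your proof is correct, and it departs from the paper's only at the final step. Both arguments use Lemma~\ref{lma:frac_invariance} and Theorems~\ref{thm:1} and~\ref{thm:2} to reduce to $\Gamma=R_{-\theta}(A\times B)\in\{\Lambda_1,\Lambda_2\}$. Both then use the same constraint: in the $\Lambda_1$ case, $-a\sin\theta+b\cos\theta\in z_2+\Z$ for all $a\in A$, $b\in B$.

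The paper uses this constraint as an \emph{upper} bound on $A$ and $B$. It places them inside progressions of step $1/|\sin\theta|$ and $1/|\cos\theta|$, deduces $D^+(A\times B)\le|\sin\theta\cos\theta|\le\frac12$, and contradicts $D^+(\Lambda_1)=1$ via rotation invariance of $D^+$.

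You instead use the full row $z+\Z e_1\subseteq\Gamma$ (from $s_0=0$) to get a \emph{lower} bound: $A\supseteq p_1+\Z\cos\theta$ and $B\supseteq p_2+\Z\sin\theta$. The product structure then lets you pair $p_1$ and $p_1+\cos\theta$ with the common $b=p_2$. This forces $\sin\theta\cos\theta\in\Z$, which is impossible since $0<|\sin\theta\cos\theta|\le\frac12$ for $\theta\notin\frac\pi2\Z$. That single step already gives the contradiction, so the full rectangular lattice and its density are not needed.

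Your route is purely algebraic and shorter. It needs no Beurling density, no computation of $D^+(\Lambda_1)$, and no rotation invariance of $D^+$. The paper's route instead yields a quantitative density deficit rather than a bare integrality obstruction.

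Your ``safeguard'' paragraph should be dropped. It is not a valid argument as written: $\Z v_1+\Z\cos\theta$ is just $\Z\cos\theta$, and nothing there is dense or has infinite density. The integrality argument is complete without it.
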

\begin{proof}
Since $\mathbf{G}(\1_{[0,1]},\Z^2)$ is an orthonormal basis, it follows from Lemma \ref{lma:frac_invariance}, that $\mathbf{G}(\ft_{-\theta} \1_{[0,1]},R_{\theta}\Z^2) = \mathbf{G}(g,\Lambda)$ is an orthonormal basis.

Let $A,B \subseteq \R$ be two arbitrary subsets of the real line. We need to show that $\mathbf{G}(g, A \times B)$ is not an orthonormal basis for $\lt$.
Assume the contrary, i.e., $\mathbf{G}(g, A \times B)$ is an orthonormal basis.
Applying the fractional Fourier transform of order $\theta$, and using Lemma \ref{lma:frac_invariance}, we obtain that
    $
    \mathbf{G}(\1_{[0,1]}, R_{-\theta}(A \times B))
    $
    is an orthonormal basis for $\lt$.
    
According to Theorem \ref{thm:1}, $[0,1]^2 + R_{-\theta}(A \times B)$ must be a tiling for $\R^2$. Theorem \ref{thm:2} implies that $R_{-\theta}(A \times B)$ is of the form $\Lambda_1$ or $\Lambda_2$. Suppose that it is of the form $\Lambda_1$ (the case where it is of the form $\Lambda_2$ can be treated in a similar way), i.e.,
\begin{equation}\label{eq:set_eq}
    R_{-\theta}(A \times B) = z + \bigcup_{k \in \Z} (\Z + s_k) \times \{ k \}
\end{equation}
where $z = (z_1,z_2) \in \R^2$ and $\{ s_k \}_{k \in \Z} \subseteq [0,1)$ with $s_0=0$. It follows that for every $a \in A$ and $b \in B$ we have
$$
-a \sin \theta + b \cos \theta \in z_2 + \Z.
$$
Fixing some $b_0 \in B$ we obtain from the previous relation that the set $A$ satisfies the inclusion
$$
A \subseteq \frac{-z_2 + b_0 \cos \theta}{\sin \theta} + \frac{1}{\sin \theta} \Z.
$$
Here we used the assumption $\theta \not \in \frac \pi2 \Z$ which gives $\sin \theta \neq 0$ and $\cos \theta \neq 0$. Hence, $A$ is contained in an arithmetic progression with spacing $\frac{1}{|\sin \theta|}$. An analogous argument shows that $B$ is contained in an arithmetic progression with spacing $\frac{1}{|\cos \theta|}$. From this, we obtain that the upper Beurling density of $A \times B$ satisfies the bound
$$
D^+(A \times B) \leq |\sin \theta \cdot \cos \theta| \leq \frac 12.
$$
Since the upper Beurling density is rotation invariant, we obtain $D^+(R_{-\theta}(A \times B)) \leq \frac 12$. But $D^+(\Lambda_1) = 1$, which gives a contradiction to the set-equality \eqref{eq:set_eq}. Therefore, $\mathbf{G}(g,A \times B)$ cannot be an orthonormal basis. Since $A,B$ were arbitrary, the result follows.
\end{proof}

According to the discussion in the introduction, the function $g$ in Theorem \ref{thm:io_answer} cannot be compactly supported. This can be seen directly, when working with the integral representation of $\ft_\theta$, which says that if $\theta \not \in \frac \pi2 \Z$ then for every $f \in L^1(\R) \cap L^2(\R)$ one has
\begin{equation}\label{eq:int_form}
    \ft_\theta f(\xi) = \gamma_\theta |\sin\theta|^{-1/2} \int_{\mathbb R} e^{\pi i \left(\frac{\xi^2+y^2}{\tan\theta}-\frac{2\xi y}{\sin\theta} \right)} f(y)\,dy.
\end{equation}
This integral formula and related representations can be found, for instance, in Folland's book \cite{folland}.
From \eqref{eq:int_form}, we see that $\ft_\theta f(\xi)$ is, up to a dilation and multiplication with a non-vanishing function, the Fourier transform of the compactly supported function $$y \mapsto \1_{[0,1]}(y) e^{ \frac{\pi i y^2}{\tan \theta} }$$ which arises as a product of $\1_{[0,1]}$ and a quadratic exponential. Hence, $\ft_\theta \1_{[0,1]}$ has unbounded support whenever $\theta \not \in \frac \pi2 \Z$.

\section*{Acknowledgments}

The author thanks Nir Lev for very helpful discussions on this project and Jordy Timo van Velthoven for helpful comments on an earlier version of the manuscript.

\medskip
The author is grateful to the Azrieli Foundation for the award of an Azrieli Fellowship and acknowledges the support of this research by ISF Grant No.~854/25 and ISF Grant No.~1044/21.

\bibliographystyle{plain}
\bibliography{bibfile}

\begin{thebibliography}{10}

\bibitem{AAK20}
E.~Agora, J.~Antezana, and M.~N. Kolountzakis.
\newblock {Tiling functions and Gabor orthonormal basis}.
\newblock {\em Appl. Comput. Harmon. Anal.}, 48(1):96--122, 2020.

\bibitem{BM21}
B.~Behera and N.~Molla.
\newblock {Characterization of Schauder basis property of Gabor systems in
  local fields}.
\newblock {\em Acta Sci. Math. (Szeged)}, 87:517--539, 2021.

\bibitem{bekka}
B.~Bekka.
\newblock {Square integrable representations, von Neumann algebras and an
  application to Gabor analysis}.
\newblock {\em J. Fourier Anal. Appl.}, 10(4):325--349, 2004.

\bibitem{BO18}
H.~Burgiel and V.~Oussa.
\newblock {Gabor orthonormal bases generated by indicator functions of
  parallelepiped-shaped sets}.
\newblock {\em Adv. Pure Appl. Math.}, 9(2):93--107, 2018.

\bibitem{CL18}
R.~Chung and C.-K. Lai.
\newblock {Non-symmetric convex polytopes and Gabor orthonormal bases}.
\newblock {\em Proc. Amer. Math. Soc.}, 146(12):5147--5155, 2018.

\bibitem{DL22}
A.~Debernardi~Pinos and N.~Lev.
\newblock {Gabor orthonormal bases, tiling and periodicity}.
\newblock {\em Math. Ann.}, 384:1461--1467, 2022.

\bibitem{EW12}
M.~Einsiedler and T.~Ward.
\newblock {\em {Ergodic Theory: with a view towards Number Theory}}.
\newblock Springer London, 2012.

\bibitem{folland}
G.~B. Folland.
\newblock {\em {Harmonic Analysis in Phase Space}}.
\newblock Princeton University Press, 2016.

\bibitem{fuglede1974commuting}
B.~Fuglede.
\newblock Commuting self-adjoint partial differential operators and a group
  theoretic problem.
\newblock {\em J. Funct. Anal.}, 16(1):101--121, 1974.

\bibitem{GLW15}
J.-P. Gabardo, C.-K. Lai, and Y.~Wang.
\newblock {Gabor orthonormal bases generated by the unit cubes}.
\newblock {\em J. Funct. Anal.}, 269(5):1515--1538, 2015.

\bibitem{greenfeld2017fuglede}
R.~Greenfeld and N.~Lev.
\newblock Fuglede’s spectral set conjecture for convex polytopes.
\newblock {\em Anal. PDE}, 10(6):1497--1538, 2017.

\bibitem{grepstad2026bounded}
Sigrid Grepstad and Mihail~N Kolountzakis.
\newblock Bounded common fundamental domains for two lattices.
\newblock {\em Adv. Math.}, 487:110776, 2026.

\bibitem{Groechenig}
K.~Gröchenig.
\newblock {\em {Foundations of Time-Frequency Analysis}}.
\newblock Birkhäuser Basel, 2001.

\bibitem{HW01}
D.~Han and Y.~Wang.
\newblock {Lattice tiling and the Weyl-Heisenberg frames}.
\newblock {\em Geom. Funct. Anal.}, 11:742--758, 2001.

\bibitem{HW04}
D.~Han and Y.~Wang.
\newblock {The existence of Gabor bases and frames}.
\newblock {\em Contemp. Math.}, 345:183--192, 2004.

\bibitem{HP06}
C.~Heil and A.~M. Powell.
\newblock {Gabor Schauder bases and the Balian-Low theorem}.
\newblock {\em J. Math. Phys.}, 47(11), 2006.

\bibitem{H83}
L.~H{\"o}rmander.
\newblock {\em {The analysis of linear partial differential operators. I:
  Distribution theory and Fourier analysis}}, volume 256 of {\em Grundlehren
  der Mathematischen Wissenschaften}.
\newblock Springer Verlag, Berlin, 1983.

\bibitem{IKL21}
A.~Iosevich, M.~Kolountzakis, Y.~Lyubarskii, A.~Mayeli, and J.~Pakianathan.
\newblock {On Gabor orthonormal bases over finite prime fields}.
\newblock {\em Bull. Lond. Math. Soc.}, 53(2):380--391, 2021.

\bibitem{IM18}
A.~Iosevich and A.~Mayeli.
\newblock {Gabor Orthogonal Bases and Convexity}.
\newblock {\em Discrete Anal.}, 2018.

\bibitem{LM19}
C.-K. Lai and A.~Mayeli.
\newblock {Non-separable Lattices, Gabor Orthonormal Bases and Tilings}.
\newblock {\em J. Fourier Anal. Appl.}, 25:3075--3103, 2019.

\bibitem{lev2022fuglede}
N.~Lev and M.~Matolcsi.
\newblock {The Fuglede conjecture for convex domains is true in all
  dimensions}.
\newblock {\em Acta Math.}, 228(2):385--420, 2022.

\bibitem{LT26}
N.~Lev and A.~Tselishchev.
\newblock {Gabor unconditional bases and frames in $L^p(\mathbb{R})$}.
\newblock {\em arXiv:2605.17970}, 2026.

\bibitem{L05}
Y.-Z. Li.
\newblock {A note on Gabor orthonormal bases}.
\newblock {\em Proc. Amer. Math. Soc.}, 133(8):2419--2428, 2005.

\bibitem{L01}
Y.~Liu.
\newblock {A characterization for windowed Fourier orthonormal basis with
  compact support}.
\newblock {\em Acta Math. Sin. (Engl. Ser.)}, 17:501--506, 2001.

\bibitem{LW03}
Y.~Liu and Y.~Wang.
\newblock {The uniformity of non-uniform Gabor bases}.
\newblock {\em Adv. Comput. Math.}, 18:345--355, 2003.

\bibitem{N24}
F.~Nicola.
\newblock {Maximally localized Gabor orthonormal bases on locally compact
  Abelian groups}.
\newblock {\em Adv. Math.}, 451, 2024.

\bibitem{tao2004fuglede}
T.~Tao.
\newblock Fuglede's conjecture is false in 5 and higher dimensions.
\newblock {\em Math. Res. Lett.}, 11(2):251--258, 2004.

\bibitem{Z21}
W.~Zhou.
\newblock {On the construction of discrete orthonormal Gabor bases on finite
  dimensional spaces}.
\newblock {\em Appl. Comput. Harmon. Anal.}, 55:270--281, 2021.

\end{thebibliography}

\end{document}